\newtheorem{theorem}{Theorem}
\newtheorem{lemma}{Lemma}
\theoremstyle{remark}
\newcommand{\bbQ}{\mathbb Q}
\newcommand{\bbR}{\mathbb R}
\newcommand{\bbZ}{\mathbb Z}
\newcommand{\bbN}{\mathbb N}
\newcommand{\bbP}{\mathbb P}
\renewcommand{\geq}{\geqslant}
\renewcommand{\ge}{\geqslant}
\title[\title{On a variant of Pillai's problem II}]{On a variant of Pillai's problem II}
\author[K. C. Chim]{Kwok Chi Chim}
\author[I. Pink]{Istv\'an Pink}
\author[V. Ziegler]{Volker Ziegler}
\thanks{The first author was supported by the Austrian Science Fund (FWF) under the projects P26114 and W1230.
The second and the third authors were supported by the Austrian Science Fund (FWF) under the project P 24801-N26. }
\subjclass[2010]{11D61,11B39,11D45}
\keywords{Diophantine equations, Pillai's problem, Recurrence sequence} 
\address{K. C. Chim \newline
         \indent Institute of Analysis and Number Theory, Graz University of Technology \newline
         \indent Kopernikusgasse 24/II \newline
         \indent A-8010 Graz, Austria}
\email{chim\char'100math.tugraz.at}
\address{I. Pink \newline
         \indent Institute of Mathematics, University of Debrecen \newline
         \indent H-4010 Debrecen, P.O. Box 12, Hungary}
\email{pinki\char'100science.unideb.hu; istvan.pink\char'100sbg.ac.at}
\address{V. Ziegler \newline
         \indent University of Salzburg \newline
         \indent Hellbrunnerstrasse 34/I \newline
         \indent A-5020 Salzburg, Austria}
\email{volker.ziegler\char'100sbg.ac.at}
\begin{document}

\begin{abstract}
In this paper, we show that there are only finitely many $c$ such that the equation $U_n - V_m = c$ has at least two distinct solutions $(n,m)$,
where $\{U_n\}_{n\geq 0}$ and $\{V_m\}_{m\geq 0}$ are given linear recurrence sequences. 
\end{abstract}

\maketitle

\section{Introduction}

A linear recurrence sequence is a sequence $\{U_n\}_{n\ge 0}$ such that for some $k\ge 1$, we have 
$$
U_{n+k}=c_1 U_{n+k-1}+\cdots+c_k U_n
$$
for all $n\ge 0$, where $c_1,\ldots,c_k$ are given complex numbers with $c_k\ne 0$. When $c_1,\ldots,c_k$ are integers and $U_0,\ldots,U_{k-1}$ are also 
integers, $U_n$ is an integer for all $n\ge 0$ and we say that $\{U_n\}_{n\ge 0}$ is defined over the integers. In what follows we will always assume that
$\{U_n\}_{n\ge 0}$ is defined over the integers.

It is known that if we write
$$
F(X)=X^k-c_1X^{k-1}-\cdots -c_k=\prod_{i=1}^t (X-\alpha_i)^{\sigma_i},
$$
where $\alpha_1,\ldots,\alpha_t$ are distinct complex numbers, and $\sigma_1,\ldots,\sigma_t$ are positive integers whose sum is $k$, then there exist 
polynomials $a_1(X),\ldots, a_t(X)$ whose coefficients are in $\bbQ(\alpha_1,\ldots,\alpha_t)$ such that $a_i(X)$ is of degree 
at most $\sigma_i-1$ for $i=1,\ldots,t$, and such that furthermore the formula
$$
U_n=\sum_{i=1}^t a_i(n)\alpha_i^n
$$
holds for all $n\ge 0$. We may certainly assume that $a_i(X)$ is not the zero polynomial for any $i=1,\ldots,t$.
We call $\alpha=\alpha_1$ a dominant root of $\{U_n\}_{n\ge 0}$, if $|\alpha_1|>|\alpha_2|\geq \dots \geq |\alpha_t|$.
In this case the sequence $\{U_n\}_{n\ge 0}$ is said to satisfy the dominant root condition.

This paper is a follow-up to our previous work \cite{SalzburgI:2016}, in which we found all integers $c$
admitting at least two distinct representations of the form $F_n - T_m$ for some positive integers $n\geqslant 2$ and $m\geqslant 2$.
Here we denote by $\lbrace F_n \rbrace_{n \geqslant 0}$ the sequence of Fibonacci numbers given by $F_0 = 0$, $F_1 = 1$ and 
$F_{n+2} = F_{n+1} + F_n$ for all $n \geqslant 0$, and denote by $\lbrace T_m \rbrace_{m\geqslant 0}$ the sequence of Tribonacci
numbers given by $T_0 = 0$, $T_1=T_2=1$ and $T_{m+3}=T_m+T_{m+1} + T_{m+2}$ for all $m \geqslant 0$. In \cite{SalzburgI:2016} 
the main result is the following:

\begin{theorem}\label{th:Fib-Tri}
The only integers $c$ having at least two representations of the form $F_n - T_m$ come from the set
$$\mathcal C=\{ 0, 1, -1, -2,  -3, 4, -5, 6, 8, -10, 11, -11, -22, -23, -41, -60, -271 \}.$$

Furthermore, for each $c\in\mathcal C$ all representations of the form $c=F_n - T_m$ with integers $n \geqslant 2$ and $m \geqslant 2$ are obtained. 
\end{theorem}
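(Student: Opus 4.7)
The plan is to suppose $c$ admits two distinct representations $c = F_{n_1} - T_{m_1} = F_{n_2} - T_{m_2}$ and to derive an explicit upper bound on $\max(n_1, m_1)$. After relabelling we may take $n_1 \geq n_2$; then $F_{n_1} - F_{n_2} = T_{m_1} - T_{m_2}$ forces $m_1 \geq m_2$, with strict inequalities once a handful of trivial coincidences are excluded. Small values of $(n_1, m_1)$ are handled by direct enumeration, so we may assume $n_1$ and $m_1$ are sufficiently large.

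The central tool is Baker's theory of linear forms in logarithms. Let $\alpha = (1+\sqrt{5})/2$ so that $F_n = (\alpha^n - \beta^n)/\sqrt{5}$ with $\beta = -1/\alpha$, and let $\gamma \approx 1.839$ be the real root of $X^3 - X^2 - X - 1$, with complex conjugate roots $\delta, \bar\delta$ of absolute value less than $1$; then $T_m = a\gamma^m + b\delta^m + \bar b \bar\delta^m$ for explicit constants $a, b \in \bbQ(\gamma, \delta)$. Isolating the dominant contributions $\alpha^{n_1}/\sqrt{5}$ and $a\gamma^{m_1}$ from the main equation and dividing yields
\[
\biggl|\frac{\alpha^{n_1}}{a\sqrt 5\,\gamma^{m_1}} - 1\biggr| \ll \max\bigl(\alpha^{-(n_1-n_2)},\, \gamma^{-(m_1-m_2)}\bigr),
\]
which controls the associated linear form $\Lambda_1 = n_1 \log\alpha - m_1 \log\gamma - \log(a\sqrt 5)$ in three logarithms of algebraic numbers. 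Matveev's theorem produces a matching lower bound $|\Lambda_1| \gg \exp(-C_0 \log n_1 \log m_1)$, and comparison yields $\min(n_1 - n_2,\, m_1 - m_2) \ll \log n_1 \log m_1$. A second regrouping, isolating $\alpha^{n_1} - \alpha^{n_2}$ against $a(\gamma^{m_1} - \gamma^{m_2})$, produces a second linear form $\Lambda_2$ whose upper bound decays exponentially in $\min(n_2, m_2)$; a further Matveev application then yields an absolute bound $\max(n_1, m_1) \leq N_0$, typically of order $10^{25}$ or larger.

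The main obstacle is compressing this astronomical bound into a searchable range. I would apply the Baker-Davenport reduction lemma, based on the continued fraction expansion of $\log\alpha / \log\gamma$, or the LLL lattice reduction algorithm, iteratively to the forms $\Lambda_1$ and $\Lambda_2$, within each of the subcases determined by which of $n_1 - n_2$, $m_1 - m_2$, $n_2$, $m_2$ is smallest. Several rounds of reduction should bring $\max(n_1, m_1)$ down to a few hundred, and an exhaustive computer search over the remaining range produces the set $\mathcal C$ and all corresponding representations. The most delicate parts are the careful bookkeeping of the case distinctions so that no pair of solutions is overlooked, and keeping the constants in Matveev's theorem tight enough that the reduction does not stall before reaching a truly small bound.
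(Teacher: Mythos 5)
Your overall plan — Binet representations, linear forms in logarithms, Baker--Davenport / LLL reduction, then a finite search — is the standard one and is indeed essentially the route taken in the companion paper \cite{SalzburgI:2016} (which is where this theorem is actually proved; in the present paper it is only quoted, while the new content is the general Theorem~\ref{Main} whose proof follows the same strategy with Baker--W\"ustholz in place of Matveev).

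However, as written there is a genuine gap in the middle of the argument. Your first application of a linear-forms lower bound only yields $\min(n_1-n_2,\,m_1-m_2)\ll \log n_1\log m_1$. You then jump directly to the ``second regrouping'', isolating $\alpha^{n_1}-\alpha^{n_2}$ against $a(\gamma^{m_1}-\gamma^{m_2})$, and claim one more application of Matveev finishes. But the linear form you get there,
\[
\Lambda = n_2\log\alpha - m_2\log\gamma + \log\!\left(\frac{\alpha^{n_1-n_2}-1}{a\sqrt{5}\,(\gamma^{m_1-m_2}-1)}\right),
\]
has a ``coefficient'' whose logarithmic height grows like $\max(n_1-n_2,\,m_1-m_2)$, and you have so far only controlled the \emph{minimum} of these two quantities. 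If, say, $m_1-m_2$ is still of size comparable to $m_1$, Matveev's lower bound for this $\Lambda$ is useless. You need an intermediate stage: split into the two cases according to which of $n_1-n_2$, $m_1-m_2$ realizes the minimum, and in each case form a linear form that carries only the already-bounded difference in its coefficient (e.g.\ group $\alpha^{n_2}(\alpha^{n_1-n_2}-1)/\sqrt{5}$ against $a\gamma^{m_1}$, treating $a\gamma^{m_2}$ and the conjugate terms as error). That intermediate application bounds the \emph{other} difference polylogarithmically, and only then does the final four-term grouping you describe have a coefficient of controlled height, so that the last Matveev bound closes the loop with $m_1\ll(\log m_1)^k$. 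This three-stage escalation is exactly what the present paper carries out in general (Lemma~\ref{lem:Case0}, then Lemma~\ref{lem:Case1-2} via Cases~1 and~2, then the form $\Lambda_3$). A smaller point: the upper bound in your final inequality decays like $\gamma^{-m_1}$ (the numerator is an $O(1)$ sum of $\beta^{n_i}$ and $\delta^{m_i}$ terms divided by $\gamma^{m_1}$), not ``exponentially in $\min(n_2,m_2)$''; this is harmless but worth stating correctly.
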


The above problem of obtaining all integers $c$ having at least 
two representations of the form $F_n - T_m$ can be regarded as a variant of Pillai's problem. 
Readers can refer to \cite{SalzburgI:2016} for the complete list of representations and some historical development of the Pillai's problem. 
The interested reader may also refer to the paper of Pillai~\cite{Pillai:1936} for the original problem, the papers of
Stroeker and Tijdeman \cite{Stroeker:1982} and Bennett \cite{Bennett:2001} for tackling special cases and the papers of Ddamulira, Luca and Rakotomalala \cite{Luca15}
and Bravo, Luca and Yaz\'{a}n \cite{Luca16} for other variants.

The purpose of this paper is to generalize Theorem \ref{th:Fib-Tri}. Assume that we are given two linear recurrence sequences $\{U_n\}_{n\ge 0}$ and
 $\{V_m\}_{m\ge 0}$ defined over the integers which satisfy the dominant root condition, then under some mild restrictions there exist only finitely many integers
 $c$ such that the equation
 $$ U_n-V_m=c$$
 has at least two distinct solutions $(n,m)\in\bbN\times\bbN$, where $\bbN=\{0,1,\dots,\}$ is the set of natural numbers. That is, we want to solve
\begin{equation}\label{eqt}
U_n-U_{n_1} = V_m - V_{m_1}
\end{equation}
for $(n,m) \neq (n_1,m_1)$. 

In order to avoid linear recurrence sequences such as $\lbrace 3, -3, 3, -3, \dotso \rbrace $ which would yield infinitely many solutions trivially,
we assume that both $\lbrace U_n \rbrace_{n \geqslant 0}$ and $\lbrace V_m \rbrace_{m \geqslant 0}$ are eventually strictly increasing in absolute values.
That is, we assume that there exist constants $N_0$ and $M_0$ such that $|U_{n+1}|> |U_n| > 0$ for all $n \geqslant N_0$ and
$|V_{m+1}| > |V_m| > 0$ for all $m \geqslant M_0$. We shall therefore require $n \geqslant N_0$ and $m \geqslant M_0$ when solving equation~\eqref{eqt}.

Throughout this paper, we denote by $C_0, C_1, \dotso ,C_{45}$ effectively computable constants. We prove the following theorem: 

\begin{theorem}\label{Main}
Suppose that $\lbrace U_n \rbrace_{n \geqslant 0} $ and $\lbrace V_m \rbrace_{m \geqslant 0}$ are two linear recurrence sequences defined over the integers
with dominant roots $\alpha$ and $\beta$ respectively. Furthermore, suppose that $\alpha$ and $\beta$ are multiplicatively independent. 
Suppose also that $\lbrace U_n \rbrace_{n \geqslant 0}$ and $\lbrace V_m \rbrace_{m \geqslant 0}$ are strictly increasing in absolute values
for $n \geqslant N_0$ and $m \geqslant M_0$ respectively.
Then there exists a finite set $\mathcal C$ such that the integer $c$ has at least two distinct representations of the form 
$U_n - V_m$ with $n \geqslant N_0$ and $m \geqslant M_0$, if and only if $c\in\mathcal C$. The set $\mathcal C$ is effectively computable.
\end{theorem}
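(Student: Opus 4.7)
The plan is to follow the standard Baker--Davenport strategy for Pillai-type problems, adapted to two generic linear recurrences with multiplicatively independent dominant roots. Using the dominant-root expansion one writes
$$U_n = a(n)\alpha^n + E_U(n), \qquad V_m = b(m)\beta^m + E_V(m),$$
where $a(\cdot), b(\cdot)$ are polynomials of bounded degree and the error terms satisfy $|E_U(n)| \le C\,n^{s}|\alpha'|^n$ for some $|\alpha'|<|\alpha|$ (and similarly for $V$). A double representation of $c$ becomes
$$U_n - U_{n_1} \;=\; V_m - V_{m_1}, \qquad (n,m)\neq(n_1,m_1).$$
Because $|U_n|$ and $|V_m|$ are eventually strictly increasing, $n=n_1$ forces $m=m_1$ (and vice versa), so after swapping the two representations one may assume $n>n_1$ and $m>m_1$. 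Combining this with $|U_n|\asymp|\alpha|^n$ and $|V_m|\asymp|\beta|^m$, the equation forces $n\log|\alpha|$ and $m\log|\beta|$ to agree up to an additive term of order $\max(n_1,m_1)\log(|\alpha||\beta|)$; in particular, $n$ and $m$ are linearly comparable once they are large enough.

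I would then split the argument according to the sizes of the gaps $n-n_1$ and $m-m_1$. In the \emph{large-gap case}, when both gaps are large, the two dominant-root terms $a(n)\alpha^n$ and $b(m)\beta^m$ dwarf everything else, and rearranging yields an inequality of the shape
$$\left|\frac{b(m)\beta^m}{a(n)\alpha^n}-1\right| \;\le\; C_1\max\bigl(|\alpha|^{n_1-n},\,|\beta|^{m_1-m},\,\eta^{n}\bigr)$$
for some $\eta<1$. Matveev's lower bound applied to the three-logarithm form $n\log\alpha - m\log\beta + \log(b(m)/a(n))$ gives the converse estimate $\exp(-C_2\log\max(n,m))$; comparing the two inequalities and invoking the comparability of $n$ and $m$ produces an absolute upper bound on $n$. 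In the \emph{small-gap case}, if say $n-n_1$ is bounded, then $U_n-U_{n_1}$ has size only polynomial in $n$, which via the equation forces $m-m_1$ to be bounded too (a mixed sub-case where one gap is bounded while the other is large is ruled out by elementary size comparison). The surviving equation
$$b(m_1)\beta^{m_1}(\beta^{m-m_1}-1) \;=\; a(n_1)\alpha^{n_1}(\alpha^{n-n_1}-1) + \text{error}$$
is then treated by a second application of Matveev, now to a two-logarithm form in $\log\alpha$ and $\log\beta$, again yielding an effective bound on $\max(n_1,m_1)$.

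Putting the cases together, all admissible quadruples $(n,n_1,m,m_1)$ lie in an effectively computable finite set, and therefore so does $\mathcal{C}$. The main obstacle I foresee is the verification that the relevant linear forms in logarithms are genuinely nonzero in each sub-case---this is exactly where the multiplicative independence of $\alpha$ and $\beta$ is indispensable; if a form were to vanish for some $(n,m)$, multiplicative independence forces a rigid algebraic relation $\alpha^n\beta^{-m} = b(m)/a(n)$ which can be satisfied only by finitely many $(n,m)$ and hence contributes only a bounded set of exceptions. A secondary annoyance is controlling the polynomial prefactors $a(n), b(m)$---whose heights grow only polynomially with $n,m$ and can therefore be absorbed into the Matveev bound---and handling sign or complex-conjugate subtleties when $\alpha$ or $\beta$ is negative or non-real.
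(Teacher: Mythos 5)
Your overall strategy (dominant--root expansion, linear forms in logarithms via Baker/Matveev, multiplicative independence to rule out vanishing of the linear forms and to give a nontrivial height lower bound) is the same as the paper's, and your observation that the nonvanishing is where multiplicative independence enters is exactly Lemma~2 of the paper. However, the dichotomy ``large gap / small gap'' is too coarse, and two of your intermediate claims are false.

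First, the claim that a bounded gap $n-n_1$ makes $U_n-U_{n_1}$ of size polynomial in $n$ is wrong: with $n-n_1=k$ fixed one has $U_n-U_{n_1}\asymp |a(n)|\,|\alpha|^{n_1}\,|\alpha^{k}-1|$, which is exponential in $n_1$, not polynomial. Consequently your argument that this ``forces $m-m_1$ to be bounded too'' does not go through, and the ``mixed'' sub-case (one gap small, the other large) is \emph{not} ruled out by elementary size comparison; indeed $U_n-U_{n_1}$ with bounded gap and $V_m-V_{m_1}$ with large gap are both of size roughly $|\alpha|^n\asymp|\beta|^m$, so the equation can be satisfied. The paper has to perform a \emph{second} application of Baker--W\"ustholz (its Cases~1 and~2, leading to Lemma~4) precisely to bound the larger of the two gaps.

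Second, the dichotomy leaves the whole intermediate regime unaddressed. After the first linear-form application, all one can conclude is $\min\bigl\{(n-n_1)\log(|\alpha|/\alpha'),\,(m-m_1)\log(|\beta|/\beta')\bigr\}\le C(\log m)^2$; since the height of $a(n)/b(m)$ is $\asymp\log m$ and $\log B\asymp\log m$, the Baker--W\"ustholz lower bound is $\exp(-C(\log m)^2)$, \emph{not} $\exp(-C\log\max(n,m))$ as you wrote. So the smaller gap may grow like $(\log m)^2$ and is not bounded, and the coefficient $\alpha^{n-n_1}-a(n_1)/a(n)$ in the next linear form therefore has height that grows like $(\log m)^2$ rather than $O(1)$. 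This is why the argument requires a \emph{chain} of three applications of the linear-forms theorem, each feeding a polylogarithmic height bound into the next: the first bounds $\min(\text{gaps})$ by $(\log m)^2$, the second bounds $\max(\text{gaps})$ by $(\log m)^3$, and only then does a third application --- to a form in $n_1\log|\alpha|-m_1\log|\beta|$ whose coefficient now has height $\lesssim(\log m)^3$ --- produce $m\lesssim(\log m)^4$ and hence the absolute bound on $m$. Your proposal as written would terminate after two applications and leaves this gap open.
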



Besides, the assumption that $\alpha$ and $\beta$ are multiplicatively independent is needed to avoid scenarios such as having 
$\lbrace U_n \rbrace_{n \geqslant 0} = \lbrace F_n \rbrace_{n \geqslant 0} $, $\lbrace V_m \rbrace_{m \geqslant 0} = \lbrace F_m \rbrace_{m \geqslant 0}$. 
In this case equation $c=F_{n+2} - F_{n+1} = F_{n+1} - F_{n-1}$ holds for all $n \geqslant 1$ and we have infinitely many $c$ that yield at least two solutions
to equation $ U_n-V_m=c$.

It should be also noted that the assumption that $\alpha$ and $\beta$ are multiplicatively independent is not necessary for the existence of only finitely many $c$. Consider the case
where $\lbrace U_n \rbrace_{n \geqslant 0} =\lbrace 2^n + 1 \rbrace_{n \geqslant 0}$ and $\lbrace V_m \rbrace_{n \geqslant 0} =\lbrace 4^m + 2\rbrace_{m \geqslant 0}$.
By elementary divisbility criteria one can easily verify that the only solutions to \eqref{eqt} with $n\neq n_1$ satisfy
$n=2m$ and $n_1=2m_1$, i.e. $c=-1$. Although \eqref{eqt} has infinitely many solutions the only $c$ such that $U_n-V_m=c$ has at least two solutions is $c=-1$.

In view of the two examples above it seems to be an intersting problem to relax the condition that $\alpha$ and $\beta$ are multiplicatively independent in Theorem \ref{Main}.

We shall prove Theorem \ref{Main} by applying the results of linear forms in logarithms and some results on the heights of algebraic numbers several
times to obtain an effectively computable upper bound for the value of the largest unknown among $\left\{n, m, n_1, m_1 \right\}$. 


\section{Preliminaries} \label{prelim}

In this section we present two basic tools needed in the proof of Theorem \ref{Main}. Firstly, we state a result on lower bounds of
linear forms in logarithms due to Baker and W\"ustholz~\cite{bawu93}. Secondly we provide a lower bound for the height of numbers of the
form $\frac{\alpha^n}{\beta^m}$ provided that $\alpha$ and $\beta$ are multiplicatively independent, and an upper bound for the height of $\frac{p(n)}{q(m)}$, where $p, q$ are arbitrary but fixed polynomials.

\subsection{A lower bound for linear forms in logarithms of algebraic numbers}

In 1993, Baker and W\"ustholz \cite{bawu93} obtained an explicit bound for linear forms in logarithms with a 
linear dependence on $\log B$, where $B \geqslant e$ denotes an upper bound for the height of the
linear form (to be defined later in this section).
It is a vast improvement compared with lower bounds with a dependence on higher powers of $\log B$ in preceding
publications by other mathematicians in particular Baker's original results \cite{Baker:1966}. The final structure
for the lower bound for linear forms in logarithms without an
explicit determination of the constant involved has been established by W\"ustholz \cite{wu88} and the precise determination of that constant 
(which is denoted as $C(n,d)$ in \cite{bawu93} and later in this section as $C(k,d)$) is the central aspect of \cite{bawu93}
(see also \cite{bawu07}). The improvement was mainly due to the use of the
analytic subgroup theorem established by W\"ustholz \cite{wu89}. We shall now state the result of Baker and W\"ustholz.

Denote by $\alpha_1, \dots, \alpha_k$ algebraic numbers, not $0$ or $1$, and by $\log \alpha_1, \dots, \log \alpha_k$
a fixed determination of their logarithms. Let $K=\bbQ(\alpha_1, \dotso, \alpha_k)$ and let $d=[K:\bbQ]$ be the degree of $K$ over $\bbQ$.
For any $\alpha \in K$, suppose that its minimal polynomial over the integers is
\[ g(x) = a_0 x^{\delta} + a_1 x^{\delta - 1} + \cdots + a_{\delta} = a_0 \prod_{j=1}^{\delta} (x - \alpha^{(j)})\]
where $\alpha^{(j)},\; j=1, \dotso, \delta$ are all the roots of $g(x)$. The absolute logarithmic Weil height of $\alpha$ is defined as
\[ h_0(\alpha) = \frac{1}{\delta} \left( \log |a_0| + \sum_{j=1}^{\delta} \log \left( \max \lbrace|\alpha^{(j)}|, 1 \rbrace \right)\right). \]
Then the modified height $h'(\alpha)$ is defined by
\[ h'(\alpha)=\frac{1}{d}\max \{h(\alpha), |\log\alpha|, 1\},\]
where $h(\alpha) = d h_0(\alpha)$ is the standard logarithmic Weil height of $\alpha$.

Let us consider the linear form
\[ L(z_1, \dotso, z_k)=b_1 z_1 + \cdots + b_k z_k, \]
where $b_1, \dotso, b_k$ are rational integers, not all 0 and define
\[ h'(L) = \frac{1}{d}\max\{h(L),1\}, \]
where $h(L) = d \log \left(\max_{1 \le j \le k} \left\{\frac{|b_j|}{b}\right\}\right)$ is the logarithmic Weil height of $L$,
with $b$ as the greatest common divisor of $b_1, \dotso, b_k$.
If we write $B=\max\lbrace|b_1|, \dotso, |b_k|, e \rbrace$, then we get
\[ h'(L)\leqslant \log B. \]

With these notations we are able to state the following result due to Baker and W\"ust\-holz~\cite{bawu93}.

\begin{theorem}\label{BaWu}
If ${\it\Lambda}=L(\log \alpha_1, \dotso, \log\alpha_k) \neq 0$, then
\[\log|{\it\Lambda}|\geqslant -C(k,d)h'(\alpha_1)\cdots h'(\alpha_k)h'(L), \]
where
\[ C(k,d)=18(k+1)!\,k^{k+1}(32d)^{k+2}\log(2kd). \]
\end{theorem}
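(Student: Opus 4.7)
The plan is to follow the transcendence-theoretic strategy of Baker and W\"ustholz~\cite{bawu93}, which refines classical Baker theory via the \emph{analytic subgroup theorem} of W\"ustholz~\cite{wu89}. Set $K = \bbQ(\alpha_1,\ldots,\alpha_k)$ and work inside the commutative algebraic group $G = \mathbb{G}_{\mathrm{a}}\times\mathbb{G}_{\mathrm{m}}^{k}$ of dimension $k+1$ defined over $K$. The vector $u = (\Lambda, b_1\log\alpha_1, \ldots, b_k\log\alpha_k)$ in the Lie algebra of $G$ exponentiates to $(\Lambda,\alpha_1^{b_1},\ldots,\alpha_k^{b_k})$. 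The hypothesis $\Lambda \neq 0$ says that $u$ does not lie in the Lie algebra of a proper algebraic subgroup arising from a hidden multiplicative relation among the $\alpha_j$; the analytic subgroup theorem converts this qualitative non-vanishing into a quantitative lower bound. I would argue by contradiction: assume the stated inequality fails, so that $|\Lambda|$ is extremely small, and derive an impossibility.

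The technical core is a three-stage transcendence construction. First, Siegel's lemma produces an auxiliary polynomial $P \in \mathcal{O}_K[X_0, X_1, \ldots, X_k]$ of multi-degree $(L_0, L, \ldots, L)$, with coefficients of controlled height, such that the entire function $F(z) = P(z, \alpha_1^{z}, \ldots, \alpha_k^{z})$ vanishes to order $T$ at $z = 0, 1, \ldots, S$. Feasibility reduces to a monomial count: the number $\asymp L_0 L^{k}$ of free coefficients must exceed the number $\asymp ST$ of vanishing conditions, with the height cost absorbed into $h'(\alpha_1),\ldots,h'(\alpha_k)$. Second, the assumed smallness of $|\Lambda|$ feeds into an \emph{extrapolation step}: a Schwarz-lemma estimate on a disk of radius $R \gg S$ combined with a Liouville-type inequality for nonzero algebraic numbers of bounded height forces $F$ to vanish to order $T$ on a much longer arithmetic progression $0,1,\ldots,S'$ with $S' \gg S$.

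The main obstacle is then the \emph{zero estimate}: iterating the extrapolation produces so many vanishings that, unless the coordinates $\alpha_1^{b_1},\ldots,\alpha_k^{b_k}$ satisfy a nontrivial multiplicative relation giving rise to a proper algebraic subgroup of $G$, one would be forced to conclude $F \equiv 0$, contradicting the non-triviality of $P$. The quantitative Masser--W\"ustholz zero estimate on commutative algebraic groups rules out the pathological dependence under the hypothesis $\Lambda \neq 0$, closing the contradiction. Achieving the refined bound that is \emph{linear} in $\log B$ (so that $h'(L) \leq \log B$ appears only to the first power) additionally requires a Fel'dman-type device in which one constructs not a single $P$ but a family indexed by the $b_j$ and cancels the dependence on $B$ between the Siegel construction and the extrapolation; this, together with the zero estimate, is the genuinely hard part.

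Finally, the explicit constant $C(k,d) = 18(k+1)!\,k^{k+1}(32d)^{k+2}\log(2kd)$ is extracted by optimizing the parameters $L_0, L, S, T$ across the three stages. The factor $(k+1)!\,k^{k+1}$ comes from counting monomials of multi-degree $(L_0,L,\ldots,L)$; the factor $(32d)^{k+2}$ absorbs the number of archimedean embeddings of $K$ together with size estimates of the shape $|\alpha^{(j)}| \leq \exp(d\,h_0(\alpha))$ across all conjugates; and $\log(2kd)$ is the logarithmic saving afforded by the Fel'dman refinement. This bookkeeping is lengthy and delicate, but structurally routine once the construction, the extrapolation, and the zero estimate have been set up as above.
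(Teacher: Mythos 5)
The paper does not prove Theorem~\ref{BaWu}; it quotes the statement verbatim from Baker and W\"ustholz~\cite{bawu93} and uses it as an imported black box (the surrounding text explicitly attributes the theorem, discusses the history through W\"ustholz's analytic subgroup theorem, and moves on). So there is no internal proof in the paper against which your argument can be checked; the ``proof'' here is a citation.

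What you have written is a reasonable high-level caricature of the Baker--W\"ustholz method --- auxiliary function via Siegel's lemma, extrapolation by a Schwarz/Liouville alternation, a Masser--W\"ustholz zero estimate on commutative algebraic groups, a Fel\/'dman-type refinement to reach linear dependence on $\log B$, and the analytic subgroup theorem as the conceptual engine --- but it is a plan, not a proof, and as written it cannot be verified. You never fix the parameters $L_0, L, S, T, R$, the precise height bounds in the Siegel step, the exact non-degeneracy dichotomy fed to the zero estimate, or how the quantitative zero estimate interacts with the extrapolation to close the contradiction. Moreover, the paragraph that ``explains'' $C(k,d)=18(k+1)!\,k^{k+1}(32d)^{k+2}\log(2kd)$ is narrative rather than derivation: the factors $(k+1)!\,k^{k+1}$ and $(32d)^{k+2}$ do not literally arise as a monomial count and a conjugate count; they emerge only after the full parameter optimization over all three stages carried out in~\cite{bawu93} and~\cite{bawu07}, and your sketch gives no way to recover those exponents. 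In the context of this paper the correct move is the one the authors make: state the theorem and cite the source, rather than attempt to reprove a result whose complete proof occupies the better part of a monograph.
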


With $|{\it\Lambda}| \leqslant \frac{1}{2}$, we have $\frac{1}{2}|{\it\Lambda}| \leqslant |{\it\Phi}| \leqslant 2|\it{\Lambda}|$, where
\[ {\it\Phi} = e^{{\it\Lambda}}-1 = \alpha_1^{b_1} \cdots \alpha_k^{b_k}-1, \]
so that
\begin{equation} \label{linearform}
\log|\alpha_1^{b_1} \cdots \alpha_k^{b_k}-1| \geqslant \log|{\it\Lambda}| - \log2.
\end{equation}

\subsection{Some results on heights}

Before we state our results let us recall some well known properties of the absolute logarithmic height:
\begin{equation*}
\begin{split}
h_0(\eta \pm \gamma) \leqslant & \ h_0(\eta)+h_0(\gamma)+\log{2},  \\
h_0(\eta\gamma^{\pm 1}) \leqslant & \ h_0(\eta)+h_0(\gamma),  \\
h_0(\eta^\ell)=& \ |\ell|h_0(\eta), \qquad \textrm{for} \; \ell \in \bbZ,
\end{split}
\end{equation*}
where $\eta,\gamma$ are some algebraic numbers. 

Upon applying inequality \eqref{linearform} from Theorem \ref{BaWu}, which is only valid for ${\it\Lambda}\neq 0$, we need to treat
the situation ${\it\Lambda}=0$ separately. We shall make use of the following lemma repeatedly applied when dealing with this situation.

\begin{lemma} \label{height}
Let $K$ be a number field and suppose that $\alpha,\beta\in K$ are two algebraic numbers which are multiplicatively independent. Moreover, let $n, m \in \bbZ$.
Then there exists an effectively computable constant $C_0 > 0$ such that
\[ h_0\left(\frac{\alpha^n}{\beta^m}\right) \geqslant C_0 \max \{|n|, |m|\}. \]
\end{lemma}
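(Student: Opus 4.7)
The plan is to interpret $f(n,m):=h_0(\alpha^n/\beta^m)$ as the restriction to $\bbZ^2$ of a norm on $\bbR^2$. Since all norms on $\bbR^2$ are equivalent, this immediately yields $f(n,m)\geqslant C_0\max\{|n|,|m|\}$ for some $C_0>0$, and the remaining task is to make $C_0$ effectively computable.

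Unfolding the definition of $h_0$ via the product formula gives
\[
2d\,h_0(\alpha^n/\beta^m)\;=\;\sum_{v}d_v\bigl|n\log|\alpha|_v-m\log|\beta|_v\bigr|,
\]
where $v$ runs over the places of $K=\bbQ(\alpha,\beta)$ and $d_v=[K_v:\bbQ_v]$. As a function of real $(n,m)$, the right-hand side is a sum of absolute values of linear forms and hence a seminorm. To show it is in fact a norm, I would verify that the two tuples $(\log|\alpha|_v)_v$ and $(\log|\beta|_v)_v$ are $\bbR$-linearly independent. Suppose $c\log|\alpha|_v+e\log|\beta|_v=0$ for all $v$ with $(c,e)\in\bbR^2\setminus\{0\}$: if some finite place forces the ratio $c/e$ to be rational (which happens whenever $\alpha$ or $\beta$ is not a unit, since finite valuations are discrete multiples of $\log N(v)$), one clears denominators to obtain a nontrivial integer relation for which $\alpha^p\beta^q$ has absolute value $1$ at every place, hence is a root of unity by Kronecker's theorem, contradicting multiplicative independence after raising to an appropriate power; otherwise $\alpha$ and $\beta$ are both units and Dirichlet's unit theorem, together with the fact that $\bbZ$-independent vectors in a lattice of $\bbR^{r_1+r_2}$ are automatically $\bbR$-independent, yields the same contradiction.

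To convert this nondegeneracy into an explicit constant, I would search through the archimedean places and the finitely many finite places where $\alpha$ or $\beta$ fails to be a unit (an effectively computable finite set) to pick a pair $v_1, v_2$ at which the $2\times2$ matrix
\[
A=\begin{pmatrix}\log|\alpha|_{v_1}&-\log|\beta|_{v_1}\\ \log|\alpha|_{v_2}&-\log|\beta|_{v_2}\end{pmatrix}
\]
has nonzero determinant $\Delta$. Retaining only the $v_1,v_2$ terms of the sum above and applying Cramer's rule to invert $A$ bounds $\max\{|n|,|m|\}$ by an explicit multiple of the sum of the two local contributions, which yields the desired inequality with $C_0$ expressible in terms of $d$, $\Delta$, the local degrees $d_{v_i}$, and the four chosen local logarithms.

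The main obstacle is the verification that multiplicative independence upgrades to $\bbR$-linear independence of the log-tuples; this is qualitative rather than computational, but it requires the two-case analysis above rather than a one-line appeal.
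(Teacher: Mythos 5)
Your proposal is correct and follows the same core route as the paper: pick two places $v_1,v_2$ at which the $2\times 2$ matrix of local logarithms of $\alpha$ and $\beta$ is nonsingular, invert by Cramer's rule to bound $\max\{|n|,|m|\}$ by the two local contributions, and then relate those contributions back to $h_0(\alpha^n/\beta^m)$. Two differences are worth noting. First, you use the clean identity $2d\,h_0(A)=\sum_v d_v\,|\log|A|_v|$, which makes the final comparison immediate, whereas the paper instead invokes the product formula to produce a single place $v$ with $\log\|A\|_v\geqslant\frac{1}{|S|}\max\{|\log\|A\|_{v_1}|,|\log\|A\|_{v_2}|\}$; both work, yours is slightly tidier. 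Second, and more substantively, the paper simply \emph{asserts} that multiplicative independence of $\alpha$ and $\beta$ forces a nonsingular $2\times 2$ minor, while you attempt a proof via Kronecker's theorem and Dirichlet's unit theorem; this is a genuine gap in the paper's write-up that you are right to address, and your two ingredients are the correct ones.

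That said, your case analysis for the nondegeneracy claim is compressed in a way that would need repair before it is airtight. You should first dispose of the degenerate sub-cases $c=0$ or $e=0$ (each forces $\beta$ resp. $\alpha$ to be a root of unity, contradicting multiplicative independence). Only once $c,e\neq 0$ does a finite place $v$ with $\log|\alpha|_v\neq 0$ automatically force $\log|\beta|_v\neq 0$ (since $\log|\alpha|_v=\lambda\log|\beta|_v$ with $\lambda=-e/c\neq 0$), and only then is the ratio $\lambda$ pinned down to a rational number; your phrase ``which happens whenever $\alpha$ or $\beta$ is not a unit'' is really a consequence of this preliminary reduction rather than something that holds outright. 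After clearing denominators, $\alpha^q\beta^{-p}$ having all local absolute values equal to $1$ indeed gives a root of unity by Kronecker, and then raising to the order of that root of unity produces a nontrivial multiplicative relation, contradicting independence. The Dirichlet branch is fine: when both $\alpha,\beta\in\mathcal{O}_K^\times$, multiplicative independence gives $\bbZ$-independence of their images in $\mathcal{O}_K^\times/\mathrm{tors}$, hence $\bbZ$-independence in the Dirichlet log-lattice, hence $\bbR$-independence since two $\bbZ$-independent vectors of a lattice cannot be proportional. With these adjustments your argument is complete and, if anything, more self-contained than the paper's.
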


Although Lemma \ref{height} seems to be well known we found no apropriate reference. In order to keep the paper as self contained as possible
we give a proof of this Lemma.

Before we start with the proof of Lemma \ref{height} we want to fix some notations. Let $K$ be a number field. We denote by
$M_K$ the set of places of $K$. For each $v\in M_K$ we denote by $\|\cdot\|_v$ the normalized absolute value corresponding to $v$,
i.e., if $v$ lies above $p \in M_\bbQ := \{\infty\} \cup \bbP$, where $\bbP$ is the set of rational primes, then the restriction of
$\|\cdot\|_v$ to $\bbQ$ is $| \cdot|^{[K_v:\bbQ_p]/[K:\bbQ]}_p$, where $\bbQ_p$ and $K_v$ are the $p$-adic and $v$-adic completions of $\bbQ$ and $K$
respectively. Here, $|\cdot|_\infty$ is the usual absolute value and for a prime $p$ the norm $|\cdot|_p$ is the usual $p$-adic norm such that
$|p|_p=\frac 1p$.

Let us note that with these notations the product formula (see e.g. \cite[Chapter III, Theorem 1.3]{Neukirch:ANTE}) states that
$$\sum_{v\in M_K} \log \|\alpha\|_v =0$$
and the height can be written as
$$
h_0(\alpha)=\sum_{v\in M_K} \max\{0,\log \|\alpha\|_v\}.
$$

With these notations at hand we can turn  to the proof of Lemma \ref{height}.

\begin{proof}[Proof of Lemma \ref{height}]
Denote by $S\subseteq M_K$ the finite set of places where the valuation of either $\alpha$ or $\beta$ is non-zero. i.e.
\[ S= \{v \in M_K: \|\alpha\|_v \neq 0 \mbox{ or } \|\beta\|_v \neq 0 \}. \]
We consider a $\mbox{Log}$ function defined as follows:
$$
\mbox{Log}:K \longrightarrow \prod_{v \in S}{\bbR}\qquad \alpha \longmapsto \left(\log \|\alpha\|_v \right)_{v \in S}.
$$
Obviously,  $\mbox{Log}$ has the properties that 
$$
 \alpha^n \longmapsto n \: \mbox{Log} (\alpha),\quad \mbox{and}\quad
 \alpha \cdot \beta \longmapsto \mbox{Log}(\alpha) + \mbox{Log}(\beta),
$$
so that
\[ \mbox{Log} \left(\frac{\alpha^n}{\beta^m}\right) = n \: \mbox{Log}(\alpha) - m \:\mbox{Log}(\beta).\]
Since $\alpha$ and $\beta$ are multiplicatively independent, there exist valuations $v_1,v_2\in S$ such that the matrix 
\begin{equation*}
M= \left(
\begin{array}{cc}
\log \|\alpha\|_{v_1} & \log \|\beta\|_{v_1} \\
\log \|\alpha\|_{v_2} & \log \|\beta\|_{v_2} 
\end{array}\right)
\end{equation*}
is non-singular. For the moment let us write $A=\frac{\alpha^n}{\beta^m}$. If we consider the system of linear equations
\begin{equation*}
\begin{split}
n\log \|\alpha\|_{v_1} - m \log \|\beta\|_{v_1} & = \log \|A \|_{v_1}\\ 
n\log \|\alpha\|_{v_2} - m \log \|\beta\|_{v_2} & = \log \|A \|_{v_2},
\end{split}
\end{equation*}
we obtain from Cramer's rule that
\begin{align*}
|n| & \leqslant \frac{2 \max \{|\log \|A\|_{v_1}|,   |\log \|A\|_{v_2}| \} \cdot \max \{|\log \|\beta\|_{v_1}|,   |\log \|\beta\|_{v_2}| \} }{\det M},\\
|m| & \leqslant \frac{2 \max \{|\log \|A\|_{v_1}|,   |\log \|A\|_{v_2}| \} \cdot \max \{|\log \|\alpha\|_{v_1}|,   |\log \|\alpha\|_{v_2}| \} }{\det M}.
\end{align*}
From the above inequality, we have
$$
\max \left\{|\log \|A\|_{v_1}|, |\log \|A\|_{v_2}| \right\} \geqslant \max \left\{ \widetilde{C_1}|n| , \widetilde{C_2} |m|  \right\},
$$
where 
$$\widetilde{C_1} = \frac{\det M}{2  \max \{|\log \|\beta\|_{v_1}|,   |\log \|\beta\|_{v_2}| \} }>0$$
and
$$\widetilde{C_2} = \frac{\det M}{2  \max \{|\log \|\alpha\|_{v_1}|,   |\log \|\alpha\|_{v_2}| \} }>0.$$

As noted above we have that
\[ h_0(A) = \sum_{v \in M_K} \max \{ \log \|A\|_v, 0 \} \quad \mbox{and} \quad  \sum_{v \in M_K} \log \|A\|_v = 0.\]
From the product formula we deduce that there exists $v\in M_K$ such that 
\[ \log \|A\|_v \geqslant \frac{1}{|S|} \cdot \max\{ |\log \|A\|_{v_1}|,  |\log \|A\|_{v_2}| \} . \]
Thus, we obtain
\begin{align*}
h_0(A) = h_0\left(\frac{\alpha^n}{\beta^m}\right) & \geqslant \frac{1}{|S| }\max \left\{ |\log \|A\|_{v_1}|,  |\log \|A\|_{v_2}| \right\}\\
& \geqslant \frac{1}{|S| } \max \left\{\widetilde{C_1}|n| , \widetilde{C_2} |m| \right\}\\
& \geqslant C_0 \max \left\{|n| , |m| \right\},
\end{align*}  
where we may choose $C_0 = \frac{1}{|S| } \min\left\{\widetilde{C_1}, \widetilde{C_2} \right\}$.
\end{proof}

Let us also state the following result as a lemma:

\begin{lemma}\label{lem:po-height}
 Let $K$ be a number field and $p,q\in K[X]$ arbitrary but fixed polynomials.
 Then there exists an effectively computable constant $C=C(p,q)$ such that
 $$h_0\left( \frac{p(n)}{q(m)} \right)\leqslant C \log \max\{ n, m\}.$$
\end{lemma}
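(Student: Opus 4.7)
The plan is to bound $h_0(p(n))$ and $h_0(q(m))$ separately and then combine them via the multiplicative height inequality. First, using $h_0(\eta\gamma^{\pm1}) \leqslant h_0(\eta)+h_0(\gamma)$, we immediately get
$$h_0\!\left(\frac{p(n)}{q(m)}\right) \leqslant h_0(p(n)) + h_0(q(m)),$$
so the problem reduces to estimating $h_0(p(n))$ for a fixed polynomial $p \in K[X]$ when $p(n)$ is evaluated at a positive integer $n$.

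Next, I would write $p(X) = \sum_{i=0}^{d_p} a_i X^i$ with $a_i \in K$ and the degree $d_p$ fixed. Repeated application of the additive inequality $h_0(\eta+\gamma) \leqslant h_0(\eta)+h_0(\gamma)+\log 2$ gives
$$h_0(p(n)) \leqslant \sum_{i=0}^{d_p} h_0(a_i n^i) + d_p\log 2.$$
Using multiplicativity and the identity $h_0(n^i) = i\,h_0(n) = i\log n$ (valid for the integer $n \geqslant 1$), each summand is bounded by $h_0(a_i)+i\log n$. Summing and collecting,
$$h_0(p(n)) \leqslant \Bigl(\sum_{i=0}^{d_p} h_0(a_i)+d_p\log 2\Bigr) + \frac{d_p(d_p+1)}{2}\log n,$$
which is of the form $C_p+C_p'\log n$ with $C_p, C_p'$ depending only on $p$. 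An analogous bound $h_0(q(m)) \leqslant C_q+C_q'\log m$ holds.

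Combining, one obtains $h_0(p(n)/q(m)) \leqslant (C_p+C_q)+(C_p'+C_q')\log\max\{n,m\}$. For $\max\{n,m\}$ sufficiently large (say $\geqslant 2$, which will be guaranteed once the bounds are applied in the main argument) this can be absorbed into a single constant times $\log\max\{n,m\}$, giving the claimed estimate with $C=C(p,q)$ explicitly computable from the coefficients of $p$ and $q$ and their degrees. The only mild subtlety is excluding the finitely many integers $m$ with $q(m)=0$, but since $q$ is fixed these are negligible and may be absorbed into the definition of $C$ by enlarging the region of applicability. There is no real obstacle here — the statement is essentially a routine application of the elementary height inequalities recalled in the previous subsection.
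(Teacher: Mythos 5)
Your proposal is correct and follows essentially the same route as the paper: reduce to bounding $h_0(p(n))$ via the multiplicative height inequality, expand $p(n)$ coefficient-by-coefficient using the additive and multiplicative height bounds together with $h_0(n)=\log n$ for integer $n$, and absorb the resulting constants into a single $C\log\max\{n,m\}$ (which, as you and the paper both note, requires the standing assumption $\max\{n,m\}\geqslant 2$, guaranteed in the main argument by the choice of $N_4,M_4\geqslant 2$).
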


\begin{proof}
 Since $h_0\left( \frac{p(n)}{q(m)} \right)\leqslant h_0(p(n))+h_0(q(m))$ it suffices to prove that there exists an effectively computable constant $C$ such
 that $h_0(p(n))\leqslant C \log n$ for some fixed polynomial $p\in K[X]$. Assume that $p(n)=c_kn^k+\dots+c_1n+c_0$, then we have
 \begin{align*}
  h_0(p(n))&=h_0(c_kn^k+\dots+c_1n+c_0) \\
  &\leqslant h_0(c_k)+kh_0(n)+\dots+h_0(c_0)+k\log 2 \\
  &\leqslant C \log n. 
 \end{align*}
\end{proof}


\section{Proof of Theorem \ref{Main}} \label{proof}

\subsection{Set up}

Recall that we wish to solve equation \eqref{eqt}:
\begin{equation*} 
U_n -U_{n_1}  = V_m - V_{m_1},
\end{equation*}
for $(n,m) \neq (n_1,m_1)$, with $n, n_1 \geqslant N_0$ and $m, m_1 \geqslant M_0$.

We may assume that $m \neq m_1$, since otherwise $(n,m) = (n_1,m_1)$. Without loss of generality we may assume that $m>m_1$. But, then we have to distinguish
between the two cases $n>n_1$ and $n<n_1$. Since the proof of the second case is obtained by interchanging the roles of $n$ and $n_1$, i.e. to interchange $n_1$ and $n$
everywhere, we only give the proof of the first case. Therefore we assume from now on that
$n > n_1 \geqslant N_0$ and $m > m_1 \geqslant M_0$.

 In the following we use the $L$-notation. Assume $f(x)$, $g(x)$ and $k(x)$ are real functions and that $k(x) > 0$ for $x > 1$. We shall write
\[ f(x) = g(x) + L(k(x)) \] 
for
\[ g(x) - k(x) \leqslant f(x) \leqslant g(x) + k(x).  \]
The use of the $L$-notation is like the use of the $O$-notation but with the advantage to have an explicit bound for the error term. 

Let us consider the linear recurrence sequences $\{U_n\}_{n\geq 0}$ and $\{V_m\}_{m\geq 0}$ a bit closer.
Let us assume that the characteristic polynomials of $\{U_n\}_{n\geq 0}$ and $\{V_m\}_{m\geq 0}$
are
$$
 F_U(X)=\prod_{i=1}^t (X-\alpha_i)^{\sigma_i}\quad \mbox{and}\quad F_V(X)=\prod_{i=1}^s (X-\beta_i)^{\tau_i}
$$
respectively.

Let $\alpha$ and $\beta$ be the dominant roots of $\lbrace U_n \rbrace_{n\ge 0}$ and $\lbrace V_m \rbrace_{m\ge 0}$ respectively. According to our assumptions we can write
\begin{equation}\label{eq:asymp1}
\begin{split}
U_n&=a(n)\alpha^{n}+a_2(n)\alpha_2^n+\dots+a_t(n)\alpha_t^n\\
&=a(n)\alpha^{n}+L\left(a'' n^{A}\alpha_2^n\right)\\
&=a(n)\alpha^{n}+L(a' {\alpha'}^n)
\end{split}
\end{equation}
where $a',a'',A$ are suitable but effectively computable, non-negative constants, $a(X)$, $a_i(X) \in \bbQ(\alpha_1,\dots,\alpha_t)[X]$, $2 \leqslant i \leqslant t$
and $\alpha'\in \bbR$ is such that $|\alpha_1|=|\alpha|>\alpha'>|\alpha_2|$. Note that in case that $t=1$ we put $\alpha_2=1$ and $a'=a''=A=0$ and with this choice \eqref{eq:asymp1} still holds.
Let us also note that by our assumption that $\{U_n\}_{n\ge 0}$ is non-degenerate and defined over the integers the dominant root $\alpha$ is a real algebraic integer which is not a root of unity,
hence we have $|\alpha|>1$. Thus we may assume that also $|\alpha|>\alpha'>1$ holds. This also implies that $\{|U_n|\}_{n\ge 0}$ is eventually strictly increasing.
Moreover we may assume that $|a(n)|$ is increasing for all $n\geq N_1$ for some suitable
constant $N_1$. In addition, we choose $N_1$ large enough such that $|a(n)|\geqslant |a(n')|$ for all $n>N_1$ and $n>n'>0$.

Similarly we may write
\begin{equation}\label{eq:asymp2}
V_m=b(m) \beta^{m}+L(b' {\beta'}^n)
\end{equation}
where $b',\beta'$ are suitable constants. By the same arguments as above we may also assume that $|\beta|>\beta'>1$ and $|b(m)|$ is increasing provided that $m\geqslant M_1$, where $M_1$ is some
sufficiently large number. Moreover we assume that $M_1$ is chosen large enough such that $|b(m)|\geq |b(m')|$ for all $m\geqslant M_1$ and $m>m'>0$.

Without loss of generality, let us assume that $|\alpha| > |\beta|$. We denote by $\sigma$ and $\tau$ the degree of $a(n)$ and $b(m)$ respectively. Besides, we know that $|U_n|\sim a n^\sigma |\alpha^n|$ as
$n\rightarrow \infty$, where $a$ is the leading coefficient of $a(n)$. Similarly we know that $|V_m|\sim b m^\tau |\beta^m|$ as $m\rightarrow \infty$, where $b$ is the leading coefficient of $b(n)$. Therefore
there are positive constants $C_1,C_2$ and $C_3,C_4$ such that $C_2/C_1<|\alpha|$ and $C_4/C_3<|\beta|$ with
\begin{align*}
C_1 n^\sigma |\alpha|^n & \leqslant |U_n| \leqslant C_2 n^\sigma |\alpha|^n  && \mbox{for all } n \geqslant N_2 \\
C_3 m^\tau|\beta|^m & \leqslant |V_m| \leqslant C_4 m^\tau |\beta|^m &&  \mbox{for all } m \geqslant M_2,
\end{align*}
where $N_2$ and $M_2$ are sufficiently large.

Let us assume for the moment that $n>n_1\geq N_2$ and $m>m_1\geq M_2$. Using equation \eqref{eqt} we get that
\begin{equation*} 
\begin{split}
| U_n-U_{n_1}| & \leqslant |U_n| + |U_{n_1}| \leqslant C_2 n^\sigma\left(|\alpha|^{n} + |\alpha|^{n_1} \right) = C_2 n^\sigma|\alpha|^{n} \left(1 + \frac{1}{|\alpha|^{n-n_1}} \right)\\
& \leqslant C_2 n^\sigma|\alpha|^{n} \left(1 + \frac{1}{|\alpha|} \right) = C_5n^\sigma |\alpha|^{n}
\end{split}
\end{equation*}
and
\begin{equation*} 
\begin{split}
| U_n-U_{n_1}| & \geqslant |U_n| - |U_{n_1}| \geqslant C_1n^\sigma |\alpha|^{n} - C_2 n^\sigma|\alpha|^{n_1}   = C_1 n^\sigma|\alpha|^{n} \left(1 - \frac{C_2}{C_1|\alpha|^{n-n_1}} \right)\\
& \geqslant C_1 n^\sigma|\alpha|^{n} \left(1 - \frac{C_2}{C_1|\alpha|} \right) = C_6n^\sigma |\alpha|^{n}.
\end{split}
\end{equation*}
Similarly, we have
\begin{equation*}
\begin{split}
| V_m-V_{m_1}| & \leqslant |V_m| + |V_{m_1}| \leqslant C_4 m^\tau\left(|\beta|^{m} + |\beta|^{m_1} \right) = C_4 m^\tau|\beta|^{m} \left(1 + \frac{1}{|\beta|^{m-m_1}} \right)\\
& \leqslant C_4 m^\tau |\beta|^{m} \left(1 + \frac{1}{|\beta|} \right) = C_7 m^\tau |\beta|^{m}
\end{split}
\end{equation*}
and
\begin{equation*}
\begin{split}
| V_m-V_{m_1}| & \geqslant |V_m| - |V_{m_1}| \geqslant C_3 m^\tau|\beta|^{m} - C_4 m^\tau |\beta|^{m_1}\\
& = C_3 m^\tau|\beta|^{m} \left(1 - \frac{C_4}{C_3|\beta|^{m-m_1}} \right)\\
& \geqslant C_3 m^\tau|\beta|^{m} \left(1 - \frac{C_4}{C_3|\beta|} \right) = C_8 m^\tau|\beta|^{m}.
\end{split}
\end{equation*}
Therefore, we have 
\begin{equation}\label{ineq1}
C_6 n^\sigma|\alpha|^n \leqslant | U_n-U_{n_1}| = |V_m - V_{m_1}| \leqslant C_7 m^\tau|\beta|^{m}
\end{equation}
and
\begin{equation}\label{ineq2}
C_5 n^\sigma|\alpha|^n  \geqslant  |U_n-U_{n_1} | = | V_m - V_{m_1}| \geqslant C_8 m^\tau|\beta|^m.
\end{equation}
Note that we proved \eqref{ineq1} and \eqref{ineq2} only under the assumption that $n>n_1\geq N_2$ and $m>m_1\geq M_2$. However since by assumption $n>n_1\geqslant N_0$
and $m>m_1\geqslant M_0$ we have $|U_n|>|U_{n_1}|$ and $|V_m|>|V_{m_1}|$ respectively. Therefore by enlarging $C_7$ and $C_5$ respectively decreasing $C_6$ and $C_8$ we obtain
that \eqref{ineq1} and \eqref{ineq2} also holds under the assumption that $n\geq N_2$, $n_1\geq N_0$ and $m\geq M_2$, $m_1\geq M_0$.
Thus
\begin{equation}\label{computer}
n \leqslant m \frac{\log |\beta|}{\log |\alpha|} + \tau\frac{\log m}{\log |\alpha|} + C_9,
\end{equation}
where $ 0 < \frac{\log |\beta|}{\log |\alpha|} < 1 $.

Inequality \eqref{computer} implies that $m > n$ for $m \geqslant M_3$, where $M_3$ is sufficiently large. 
Denote by $N_3$ the infimum for $n$ when $m \geqslant M_3$.
Let us assume in the following that $n> N_4 = \max \lbrace N_0, N_1,N_2,N_3,2 \rbrace$ and $m > M_4 = \max \lbrace M_0, M_1,M_2,M_3,2 \rbrace$ (and $n_1\geq N_0$ and $m_1\geq M_0$).
Let us note that if $m$ is bounded from above by an effectively computable constant as $M_4$ also $n$ is bounded from above by an effective computable constant due to inequality \eqref{computer}. Thus we can
deduce that also $c$ is bounded and Theorem \ref{Main} holds in this case. Note that we assume for technical reasons that $N_4,M_4\geq 2$.
Therefore we may assume that $m>M_4$ and hence $m > n$. 

Furthermore let us fix the following notation for the rest of the paper. Let us write $K=\bbQ(\alpha_1,\dots,\alpha_t,\beta_1,\dots,\beta_s)$ and $d=[K:\bbQ]$.


\subsection{Linear forms in logarithms}

We refer to equation \eqref{eqt} and make use of the asymptotic estimates \eqref{eq:asymp1} and \eqref{eq:asymp2}. Thus we get
\begin{multline*}
\left( a(n) \alpha^n + L(a'\alpha'^n ) \right) - \left( a(n_1) \alpha^{n_1} + L(a'\alpha'^{n_1}  )\right) =\\
\left( b(m) \beta^m + L(b'\beta'^m  )\right) - \left( b(m_1) \beta^{m_1} + L( b'\beta'^{m_1} ) \right)
\end{multline*}
Collecting the ``large'' terms on the left hand side of the equation we obtain
$$
a(n) \alpha^n - b(m)\beta^m=a(n_1) \alpha^{n_1} - b(m_1) \beta^{m_1} + L\left(a' {\alpha'}^n + a' {\alpha'}^{n_1} + b' {\beta'}^m + b' {\beta'}^{m_1} \right)
$$
and therefore the inequality
$$
\vert a(n) \alpha^n - b(m) \beta^m \vert \leqslant |a(n_1)||\alpha|^{n_1}  + |b(m_1)||\beta|^{m_1} + a'{\alpha'}^n + a'{\alpha'}^{n_1} +b'{\beta'}^m + b'{\beta'}^{m_1}.
$$ 
Dividing through $b(m) \beta^m$ and using the inequalities \eqref{ineq1} and \eqref{ineq2}, we get (note that we assume $n\geq N_3$ and $m\geq M_3$, i.e. $|a(n)|\geq |a(n_1)|$ and $|b(m)|\geq |b(m_1)|$):
\begin{align*}
\left| \frac{a(n) \alpha^n}{b(m) \beta^m}  - 1 \right| 
& \leqslant \frac{|a(n_1)||\alpha|^{n_1}}{|b(m)||\beta|^m}+\frac{|b(m_1)||\beta|^{m_1}}{|b(m)||\beta|^m}+\frac{a'{\alpha'}^n}{|b(m)||\beta|^m}+\frac{a'{\alpha'}^{n_1}}{|b(m)||\beta|^m}\\
&\quad\quad +\frac{b'{\beta'}^m}{|b(m)||\beta|^m}+\frac{b'{\beta'}^{m_1}}{|b(m)||\beta|^m} \\
& \leqslant \frac{C_7m^\tau |a(n_1)||\alpha|^{n_1}}{C_6n^\sigma |b(m)||\alpha|^n}+\frac{|b(m_1)||\beta|^{m_1}}{|b(m)||\beta|^m}+
\frac{C_7m^\tau a'{\alpha'}^n}{C_6 n^\sigma |b(m)||\alpha|^n}\\
&\quad\quad +\frac{C_7m^\tau a' {\alpha'}^{n_1}}{C_6n^\sigma |b(m)||\alpha|^n}+\frac{b'{\beta'}^m}{|b(m)||\beta|^m}+\frac{b'{\beta'}^{m_1}}{|b(m)||\beta|^m} \\
& \leqslant C_{11} |\alpha|^{n_1-n} +  |\beta|^{m_1-m}  + C_{12} \left(\frac{ |\alpha|}{ \alpha'} \right)^{-n} + C_{13} |\alpha|^{n_1-n} \\
& \quad\quad + C_{14} \left( \frac{|\beta|}{\beta'} \right)^{-m}+ C_{15}|\beta|^{m_1-m} \\
& \leqslant C_{11} |\alpha|^{n_1-n} +|\beta|^{m_1-m}  + C_{12} \left(\frac{|\alpha|}{\alpha'} \right)^{n_1-n} + C_{13} |\alpha|^{n_1-n} \\
& \quad\quad + C_{14} \left( \frac{|\beta|}{\beta'} \right)^{m_1-m}+ C_{15}|\beta|^{m_1-m} \\
& \leqslant \max \left\{ C_{16} \left(\frac{|\alpha|}{\alpha'}\right)^{n_1-n} , C_{17}  \left(\frac{|\beta|}{\beta'}\right)^{m_1-m}  \right\}.
\end{align*} 
Note that $\frac{m^\tau |a(n_1)|}{n^\sigma |b(m)|} \frac{m^\tau |a(n)|}{n^\sigma |b(m)|}$, $\frac{|b(m_1)|}{|b(m)|}$ and so on are bounded by absolute
constants since $\deg(a)=\sigma$ and $\deg(b)=\tau$. Hence we obtain the inequality
\begin{align} \label{Case0}
\left| \frac{a(n)}{b(m) } \alpha^n \beta^{-m}- 1 \right|
& \leqslant \max \left\{ C_{16} \left(\frac{|\alpha|}{\alpha'}\right)^{n_1-n} , C_{17} \left( \frac{|\beta|}{\beta'}\right)^{m_1-m}  \right\}.
\end{align} 

Let us introduce
$${\it\Lambda} = n\log |\alpha| - m \log |\beta| + \log \left|\frac{a(n)}{b(m)}\right|$$
and assume that $|{\it\Lambda}| \leqslant 0.5$ and $\frac{a(n)}{b(m) } \alpha^n \beta^{-m}>0$.
Further, we put
$${\it\Phi} = e^{\it\Lambda}-1 = \left|\frac{a(n) }{b(m)}\right| |\alpha|^n |\beta|^{-m}- 1$$
and use the theorem of Baker and W\"ustholz (Theorem \ref{BaWu}) with the data
\[k=3, \quad \eta_1 = \left|\frac{a(n)}{b(m)}\right| , \quad b_1 = 1, \quad \eta_2 = |\alpha|, \quad b_2 = n, \quad \eta_3 = |\beta|,  \quad b_3 = -m. \]
Note that with this data we have $B=m$. It should be noted that we have complete information on the minimal polynomial of $\alpha$ and $\beta$. 
Therefore, $h'(\alpha)$, $h'(\beta)$ are effectively computable. Moreover, due to Lemma \ref{lem:po-height} we have $h_0\left(\frac{a(n)}{b(m)}\right)\leqslant \widetilde C \log m$ and thus
\[ h' \left(\frac{a(n)}{b(m)}\right) = \dfrac{1}{d}  \max \left\{d  h_0\left(\frac{a(n)}{b(m)}\right), \left| \log \left( \frac{a(n)}{b(m)}\right) \right|, 1 \right\}  \leqslant \widetilde{C'} \log m. \]

Before we can apply Theorem \ref{BaWu} we have to ensure that ${\it\Phi} \neq 0$. Assume to the contrary that ${\it\Phi} = 0$,
then $\frac{a(n)}{b(m)} = \pm \frac{\beta^m}{\alpha^n}$. With the use of Lemma \ref{height} we get
$$
\widetilde{C} \log m \geqslant h_0\left( \frac{a(n)}{b(m)} \right) = h_0\left( \frac{\beta^m}{\alpha^n}\right) \geqslant C_0 \max\{n,m\}=C_0m
$$
which yields an absolute upper bound for $m$. Therefore also $n$ and $c$ are bounded, i.e. Theorem \ref{Main} holds in this special case.

An application of Theorem \ref{BaWu} yields
\begin{equation*}
\log |{\it\Phi}| \geqslant -C(3,d) h'\left(\frac{a(n)}{b(m)}\right) h'(\alpha) h'(\beta) \log m - \log 2
\end{equation*}
and together with inequality \eqref{Case0} we have
\[\min \left\{ (n-n_1)\log\left(\frac{|\alpha|}{\alpha'}\right),(m-m_1)\log\left(\frac{|\beta|}{\beta'}\right)\right\} < C_{18} (\log m)^2. \]

Thus we have proved so far:
\begin{lemma}\label{lem:Case0}
 Assume that $(n,m,n_1,m_1)$ is a solution to equation \eqref{eqt} with $m>m_1$. Then we have
 $$\min \left\{ (n-n_1)\log\left(\frac{|\alpha|}{\alpha'}\right),(m-m_1)\log\left(\frac{|\beta|}{\beta'}\right) \right\} < C_{18} (\log m)^2.$$
\end{lemma}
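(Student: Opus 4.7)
The plan is to combine the asymptotic expansions \eqref{eq:asymp1} and \eqref{eq:asymp2} with equation \eqref{eqt} to isolate the dominant terms. First I would rearrange to obtain
\[
\bigl| a(n)\alpha^n - b(m)\beta^m \bigr| \leqslant |a(n_1)||\alpha|^{n_1} + |b(m_1)||\beta|^{m_1} + a'({\alpha'}^n+{\alpha'}^{n_1}) + b'({\beta'}^m+{\beta'}^{m_1}),
\]
and then divide through by $|b(m)\beta^m|$. Using the two-sided estimates \eqref{ineq1} and \eqref{ineq2} (together with $\deg a=\sigma$, $\deg b=\tau$), the polynomial prefactors $m^\tau|a(n_1)|/(n^\sigma|b(m)|)$, $|b(m_1)|/|b(m)|$, etc.\ are absolutely bounded, so each summand on the right becomes a constant multiple of $|\alpha|^{n_1-n}$, $|\beta|^{m_1-m}$, $(|\alpha|/\alpha')^{n_1-n}$, or $(|\beta|/\beta')^{m_1-m}$. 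Since $|\alpha|/\alpha'>1$ and $|\beta|/\beta'>1$, the last two terms dominate and one arrives at \eqref{Case0}.

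Next I would introduce the linear form $\mathit{\Lambda} = n\log|\alpha|-m\log|\beta|+\log|a(n)/b(m)|$ and its exponential counterpart $\mathit{\Phi}=e^{\mathit{\Lambda}}-1$. Before invoking Theorem \ref{BaWu} one must handle the degenerate case $\mathit{\Phi}=0$, i.e.\ $a(n)/b(m) = \pm\beta^m/\alpha^n$: Lemma \ref{height} forces $h_0(\beta^m/\alpha^n)\geqslant C_0 m$, while Lemma \ref{lem:po-height} gives the upper bound $h_0(a(n)/b(m))\leqslant \widetilde C\log m$, and these two bounds together cap $m$ (hence $n$ and $c$) by an effective constant, so the conclusion is trivial. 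One must also ensure $|\mathit{\Lambda}|\leqslant 1/2$ in order to pass between $\mathit{\Lambda}$ and $\mathit{\Phi}$ via \eqref{linearform}; otherwise \eqref{Case0} directly bounds $m$.

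With $\mathit{\Phi}\neq 0$ I would apply Theorem \ref{BaWu} with $k=3$ and $(\eta_1,\eta_2,\eta_3)=(|a(n)/b(m)|,|\alpha|,|\beta|)$, exponents $(1,n,-m)$ so $B=m$ and $h'(L)\leqslant\log m$. The modified heights $h'(|\alpha|),h'(|\beta|)$ are absolute constants depending only on the characteristic polynomials, and Lemma \ref{lem:po-height} yields $h'(a(n)/b(m))\leqslant\widetilde{C'}\log m$. Hence
\[
\log|\mathit{\Phi}|\geqslant -C_{19}(\log m)^2.
\]
Combining this lower bound with the upper bound coming from \eqref{Case0}, namely $\log|\mathit{\Phi}|\leqslant -\min\{(n-n_1)\log(|\alpha|/\alpha'),(m-m_1)\log(|\beta|/\beta')\}+O(1)$, yields the claimed inequality with a suitable $C_{18}$.

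The main obstacle is that the first coordinate $\eta_1=|a(n)/b(m)|$ of the linear form depends on the unknowns $n,m$, so its height is not a priori bounded; Lemma \ref{lem:po-height} is exactly what is needed to show that this height grows only like $\log m$, which is small enough to be absorbed into the $(\log m)^2$ on the right-hand side. The auxiliary difficulty of the vanishing case $\mathit{\Phi}=0$ is handled cleanly by Lemma \ref{height}, which is precisely why the multiplicative independence of $\alpha$ and $\beta$ enters the argument at this stage.
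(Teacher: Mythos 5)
Your proposal follows essentially the same route as the paper: isolate $a(n)\alpha^n - b(m)\beta^m$, divide by $b(m)\beta^m$ to reach \eqref{Case0}, dispose of the vanishing case $\mathit{\Phi}=0$ via Lemma~\ref{height} and Lemma~\ref{lem:po-height}, and apply Theorem~\ref{BaWu} with $\eta_1=|a(n)/b(m)|$ so that $h'(\eta_1)\ll\log m$ and $\log|\mathit{\Phi}|\gg -(\log m)^2$. One small imprecision: when $|\mathit{\Lambda}|>1/2$ (or the sign condition fails), \eqref{Case0} does not bound $m$ directly but rather forces $\min\{(n-n_1)\log(|\alpha|/\alpha'),(m-m_1)\log(|\beta|/\beta')\}$ to be at most an absolute constant, which is then absorbed into $C_{18}(\log m)^2$ — this is exactly the lemma's conclusion, so the argument still closes, just not in the way you phrased it.
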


Note that in the case that $|{\it\Lambda}| > 0.5$ or $\frac{a(n)}{b(m) } \alpha^n \beta^{-m}<0$ inequality \eqref{Case0} is possible only if 
\begin{align*} 
\max \left\{ C_{16}\left(\frac{|\alpha|}{\alpha'}\right)^{n_1-n} ,C_{17} \left(\frac{|\beta|}{\beta'}\right)^{m_1-m}\right\} \geqslant e^{\frac{1}{2}} - 1 > 0.648,
\end{align*} 
which leads to either 
$$n-n_1 \leqslant \frac{\log \left(\frac{C_{16}}{0.648}\right)}{\log \left(\frac{|\alpha|}{\alpha'}\right) }$$
or 
$$m-m_1 \leqslant \frac{\log \left(\frac{C_{17}}{0.648}\right)}{\log \left(\frac{|\beta|}{\beta'}\right) }.$$ 
These can be covered by the bound provided by Lemma \ref{lem:Case0} as long as we choose 
\begin{align*} 
C_{18} \geqslant \frac{1}{(\log M_3)^2} \max \left\{ \log \left(\frac{C_{16}}{0.648}\right),
 \log \left(\frac{C_{17}}{0.648}\right)\right\}.
\end{align*}

Now we have to distinguish between the following two cases:

\noindent\textbf{Case 1.} Let us assume that
$$
\min \left\{(n-n_1)\log \left(\frac{|\alpha|}{\alpha'}\right),(m-m_1)\log\left(\frac{|\beta|}{\beta'}\right) \right\}
= (n-n_1) \log \left(\frac{|\alpha|}{\alpha'}\right),
$$ 
i.e. we assume that $\left(\frac{|\alpha|}{\alpha'}\right)^{n_1-n}\leqslant \left(\frac{|\beta|}{\beta'}\right)^{m_1-m}$.

By collecting the ``large terms" on the left hand side, we can rewrite equation~\eqref{eqt} as
$$
a(n) \alpha^n - a(n_1) \alpha^{n_1} - b(m) \beta^m  = - b(m_1) \beta^{m_1} + L\left(a' {\alpha'}^n + a'{\alpha'}^{n_1} + b'{\beta'}^m + b'{\beta'}^{m_1} \right)
$$
and obtain the inequality
\begin{multline*}
\left| a(n) \alpha^{n_1}\left(\alpha^{n-n_1} -\frac{a(n_1)}{a(n)}\right) - b(m) \beta^m \right|\\ \leqslant  |b(m_1)||\beta|^{m_1} + a'{\alpha'}^n + a'{\alpha'}^{n_1} + b'{\beta'}^m + b'{\beta'}^{m_1}.
\end{multline*}
Dividing through $b(m) \beta^m$ and using the inequalities \eqref{ineq1} and \eqref{ineq2}, we get
\begin{align*}
\left| \frac{a(n) \alpha^{n_1}\left(\alpha^{n-n_1} -\frac{a(n_1)}{a(n)}\right)}{b(m) \beta^m}  - 1 \right|& 
\leqslant \frac{|b(m_1)||\beta|^{m_1}}{|b(m)||\beta|^{m}} + \frac{a' {\alpha'}^n}{|b(m)||\beta|^m}  + \frac{a' {\alpha'}^{n_1}}{|b(m)||\beta|^m}\\
&\quad\quad + \frac{b'{\beta'}^m}{|b(m)||\beta|^m} + \frac{b'{\beta'}^{m_1}}{|b(m)||\beta|^m} 
\end{align*}
where
\begin{align*}
\frac{|b(m_1)||\beta|^{m_1}}{|b(m)||\beta|^{m}} 
& \leqslant \left(\frac{|\beta|}{\beta'}\right)^{m_1-m},\\
\frac{a' {\alpha'}^n}{|b(m)||\beta|^m} 
& \leqslant \frac{C_7m^\tau a'{\alpha'}^n}{C_6n^\sigma |b(m)||\alpha|^n} 
\leqslant C_{19} \left(\frac{|\alpha|}{\alpha'} \right)^{-n}\\
& \leqslant C_{19} \left(\frac{|\alpha|}{\alpha'} \right)^{n_1-n}
\leqslant C_{19} \left(\frac{|\beta|}{\beta'}\right)^{m_1-m}, \\
\frac{a' {\alpha'}^{n_1}}{|b(m)| |\beta|^m} 
& \leqslant  \frac{C_7m^\tau a' {\alpha'}^{n_1}}{C_6n^\sigma |b(m)| |\alpha|^n} 
\leqslant  C_{20} \left(\frac{|\alpha|}{\alpha'}\right)^{n_1-n} \leqslant C_{20} \left(\frac{|\beta|}{\beta'}\right)^{m_1-m},\\
\frac{b' {\beta'}^m}{|b(m)| |\beta|^m}
& \leqslant C_{21} \left(\frac{|\beta|}{\beta'} \right)^{-m} 
\leqslant C_{21} \left(\frac{|\beta|}{\beta'} \right)^{m_1-m}\\
\frac{b'{\beta'}^{m_1}}{|b(m)| |\beta|^m}
& \leqslant C_{22}|\beta|^{m_1-m} 
\leqslant C_{22} \left(\frac{|\beta|}{\beta'}\right)^{m_1-m}.
\end{align*}
Hence we obtain the inequality
\begin{equation} \label{Case1}
\left|\frac{a(n) }{b(m) }\left(\alpha^{n-n_1} -\frac{a(n_1)}{a(n)}\right)  \alpha^{n_1} \beta^{-m} - 1 \right| \leqslant C_{23} \left(\frac{|\beta|}{\beta'}\right)^{m_1-m}.
\end{equation} 

\noindent\textbf{Case 2.} Let us assume that
$$
\min \left\{ (n-n_1)\log \left(\frac{|\alpha|}{\alpha'}\right),(m-m_1)\log\left(\frac{|\beta|}{\beta'}\right) \right\} = (m-m_1)\log\left(\frac{|\beta|}{\beta'}\right).
$$ 
i.e. we assume that $\left(\frac{|\beta|}{\beta'}\right)^{m_1-m}\leqslant \left(\frac{|\alpha|}{\alpha'}\right)^{n_1-n}$.

Similarly as in Case 1 we collect the ``large terms" on the left hand side and rewrite equation \eqref{eqt} as
$$
a(n) \alpha^n - b(m) \beta^m + b(m_1) \beta^{m_1}  = - a(n_1) \alpha^{n_1} + L\left(a' {\alpha'}^n + a'{\alpha'}^{n_1} +b'{\beta'}^m + b'{\beta'}^{m_1} \right)
$$
and obtain the inequality
\begin{multline*}
\left| b(m) \beta^{m_1}\left(\beta^{m-m_1} -\dfrac{b(m_1)}{b(m)}\right) - a(n) \alpha^n \right| \\ \leqslant |a(n_1)||\alpha|^{n_1}  + a'{\alpha'}^n + a'{\alpha'}^{n_1} + b'{\beta'}^m + b'{\beta'}^{m_1}.
\end{multline*}
We obtain the inequality
\begin{equation} \label{Case2}
\left| \frac{b(m)}{a(n)}\left(\beta^{m-m_1} -\frac{b(m_1)}{b(m)}\right) \alpha^{-n} \beta^{m_1} - 1 \right| 
\leqslant C_{28} \left(\frac{|\alpha|}{\alpha'}\right)^{n_1-n}
\end{equation} 
by the same arguments as in Case 1 by interchanging $a(n), \alpha, n, n_1, a'$ and $\alpha'$ with $b(m), \beta, m, m_1, b'$ and $\beta'$.


We want to apply Theorem \ref{BaWu} to both inequalities \eqref{Case1} and \eqref{Case2} respectively.
Let us consider the first case more closely. We write
$${\it\Lambda}_1 =n_1 \log |\alpha| - m \log |\beta| + \log \left| \frac{a(n)}{b(m)}\left(\alpha^{n-n_1} -\frac{a(n_1)}{a(n)}\right)  \right|$$
and assume that $|{\it\Lambda}_1| \leqslant 0.5$ and $\frac{a(n)}{b(m)}\left(\alpha^{n-n_1} -\frac{a(n_1)}{a(n)}\right)>0$. Further, we put
$${\it\Phi}_1 = e^{{\it\Lambda}_1}-1= \left|\frac{a(n)}{b(m)}\left(\alpha^{n-n_1} -\frac{a(n_1)}{a(n)}\right)\right|  |\alpha|^{n_1} |\beta|^{-m} - 1$$
and aim to apply Theorem \ref{BaWu} with $B = m$. Further, we have
\begin{gather*}
\eta_1= \left|\frac{a(n) }{b(m) }\left(\alpha^{n-n_1} -\frac{a(n_1)}{a(n)}\right)\right| , \quad b_1 = 1, \\
\eta_2 =  |\alpha|, \quad b_2 = n_1 , \quad \eta_3 = |\beta|, \quad b_3=-m.
\end{gather*}
It should be noted that as before $h'(\alpha)$ and $h'(\beta)$ are effectively computable.
For $h'(\eta_1)$, we can use the properties of height and the results of Lemma~\ref{lem:Case0} and Lemma~\ref{lem:po-height} to get 
\begin{align*}
h_0(\eta_1) &= h_0\left(\frac{a(n)}{b(m)}\left(\alpha^{n-n_1} -\frac{a(n_1)}{a(n)}\right) \right) \\
& \leqslant h_0\left(\frac{a(n)}{b(m)}\right) + (n-n_1) h_0(\alpha )  + h_0\left(\frac{a(n_1)}{a(n)}\right)+\log 2\\
& \leqslant h_0\left(\frac{a(n)}{b(m)}\right) + \frac{C_{18} (\log m)^2 }{\log \left(\frac{|\alpha|}{\alpha'}\right)} h_0(\alpha )  + h_0\left(\frac{a(n_1)}{a(n)}\right)+ \log 2\\
& \leqslant C_{29} (\log m)^2 
\end{align*}
and thus
$$h'(\eta_1) = \frac{1}{d} \max \left\{ dh_0(\eta_1), |\log\eta_1|, 1 \right\} \leqslant C_{30} (\log m)^2. $$

Now let us turn to the second case. We write
$${\it\Lambda}_2 =  m_1 \log |\beta| - n \log |\alpha| + \log \left| \frac{b(m)}{a(n)}\left(\beta^{m-m_1} -\frac{b(m_1)}{b(m)}\right) \right|$$
and assume that $|{\it\Lambda}_2| \leqslant 0.5$ and $\frac{b(m)}{a(n)}\left(\beta^{m-m_1} -\frac{b(m_1)}{b(m)}\right)>0$. Further, we put
$${\it\Phi}_2 = e^{{\it\Lambda}_2}-1 = \left|\frac{b(m)}{a(n)}\left(\beta^{m-m_1} -\frac{b(m_1)}{b(m)}\right) \right| |\alpha|^{-n} |\beta|^{m_1} - 1 $$
and aim to apply Theorem \ref{BaWu}.
As in the previous case we also have $B = m$. Further, we have
\begin{gather*}
 \eta_1= \left|\frac{b(m)}{a(n)}\left(\beta^{m-m_1} -\frac{b(m_1)}{b(m)}\right)\right|, \quad b_1 = 1,\\
 \eta_2 = |\alpha|, \quad b_2 = -n , \quad \eta_3 = |\beta|, \quad b_3=m_1.
\end{gather*} 
It should be noted that as before $h'(\alpha)$ and $h'(\beta)$ are effectively computable.
For $h'(\eta_1)$, we can use the properties of height and the results of Lemma~\ref{lem:Case0} and Lemma~\ref{lem:po-height} to get 
\begin{align*}
h_0(\eta_1) &= h_0\left(\frac{b(m)}{a(n)}\left(\beta^{m-m_1} -\frac{b(m_1)}{b(m)}\right) \right)\\
& \leqslant h_0\left(\frac{b(m)}{a(n)}\right) + (m-m_1) h_0(\beta )  +h_0\left(\frac{b(m_1)}{b(m)}\right) +  \log 2\\
& \leqslant h_0\left(\frac{b(m)}{a(n)}\right) + \frac{C_{18} (\log m)^2 }{ \log \left(\frac{|\beta|}{\beta'}\right)} h_0(\beta )+ h_0\left(\frac{b(m_1)}{b(m)}\right) + \log 2\\
& \leqslant C_{31} (\log m)^2  
\end{align*}
and thus 
$$h'(\eta_1) = \frac{1}{d} \max \left\{ dh_0(\eta_1), |\log\eta_1|, 1 \right\} \leqslant C_{32} (\log m)^2.$$

Before we can apply Theorem \ref{BaWu} we have to ensure that ${\it\Phi}_i \neq 0$ for $i=1, 2$.
Firstly we deal with the assumption that ${\it\Phi}_1 = 0$,
i.e. $\pm \frac{a(n)}{b(m)}\left(\alpha^{n-n_1} -\frac{a(n_1)}{a(n)}\right) = \frac{\beta^{m}}{\alpha^{n_1}} $. This together with Lemma \ref{lem:Case0} yields
$$
h_0 \left( \frac{\beta^{m}}{\alpha^{n}} \right) = h_0 \left(\frac{a(n) }{b(m) }\left(\alpha^{n-n_1} -\frac{a(n_1)}{a(n)}\right) \right) < C_{29} (\log m)^2
$$
as determined before. With the use of Lemma \ref{height} we get
\begin{align*}
C_{29} (\log m)^2 > h_0 \left( \frac{\beta^{m}}{\alpha^{n_1}} \right) \geqslant C_0 \max \{n_1, m \} \geqslant C_0 m .
\end{align*}
Thus $m$ is bounded by an effectively computable constant. Besides, since $m > n$ so $n$ is also bounded and therefore also $c$, i.e. Theorem \ref{Main}
holds in this case. A similar argument also applies to Case 2.

Now, we are ready to apply Theorem \ref{BaWu} and get
\begin{align*}
\log |{\it\Phi}_i|  > & -C(3,d) h'(\eta_1) h'(\alpha) h'(\beta) \log m - \log 2
\end{align*}
for $i = 1,2$. Combining this inequality with the inequalities \eqref{Case1} and \eqref{Case2}, we obtain
\[ (m-m_1)\log \left(\frac{|\beta|}{\beta'}\right) < C_{33} (\log m)^3 \quad 
\mbox{and}
\quad
(n-n_1)\log \left(\frac{|\alpha|}{\alpha'}\right) < C_{34} (\log m)^3 \]
respectively. Let $C_{35} = \max \lbrace C_{33}, C_{34} \rbrace$. These two inequalities yield together with Lemma \ref{lem:Case0} the following lemma:

\begin{lemma}\label{lem:Case1-2}
 Assume that $(n,m,n_1,m_1)$ is a solution to equation \eqref{eqt} with $m>m_1$. Then we have
 $$\max \left\{ (n-n_1)\log \left( \frac{|\alpha|}{\alpha'}\right),(m-m_1)\log\left(\frac{|\beta|}{\beta'}\right) \right\} < C_{35} (\log m)^3.$$
\end{lemma}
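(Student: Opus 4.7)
The plan is to upgrade the asymmetric bound of Lemma \ref{lem:Case0} to a symmetric bound on the maximum by running the Baker--W\"ustholz argument a second time, but now in a sharper form that takes advantage of the information Lemma \ref{lem:Case0} already provides. Lemma \ref{lem:Case0} says that at least one of the two quantities $(n-n_1)\log(|\alpha|/\alpha')$ and $(m-m_1)\log(|\beta|/\beta')$ is $O((\log m)^2)$, so I split into Case 1 and Case 2 according to which one is the minimum. The two cases are symmetric under the substitution $(a,\alpha,n,n_1,a',\alpha')\leftrightarrow (b,\beta,m,m_1,b',\beta')$, so only Case 1 needs detailed treatment.

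In Case 1, I rearrange \eqref{eqt} so that the three ``largest'' terms sit on one side:
$$a(n)\alpha^{n_1}\Bigl(\alpha^{n-n_1}-\tfrac{a(n_1)}{a(n)}\Bigr) - b(m)\beta^m = -b(m_1)\beta^{m_1} + L(\cdots).$$
Dividing through by $b(m)\beta^m$ and using the two-sided estimates \eqref{ineq1}, \eqref{ineq2}, the right hand side is bounded by a multiple of the \emph{larger} of the two small ratios; by the case hypothesis this dominant term is $(|\beta|/\beta')^{m_1-m}$, giving inequality \eqref{Case1}. This is the step where the case split pays off: both error terms $(|\alpha|/\alpha')^{n_1-n}$ and $(|\beta|/\beta')^{m_1-m}$ get absorbed into a single clean bound of the form $C_{23}(|\beta|/\beta')^{m_1-m}$.

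Next I set up the linear form
$${\it\Lambda}_1 = n_1\log|\alpha| - m\log|\beta| + \log\left|\tfrac{a(n)}{b(m)}\bigl(\alpha^{n-n_1}-\tfrac{a(n_1)}{a(n)}\bigr)\right|$$
and apply Theorem \ref{BaWu} with $k=3$, $B=m$, and $\eta_1$ the new composite algebraic number. Before applying the theorem I must dispose of ${\it\Phi}_1=0$: if this happens, then $\eta_1=\pm\beta^m/\alpha^{n_1}$ and Lemma \ref{height} forces $m$ (hence everything) to be bounded. The main technical point — and the step I expect to be the real obstacle — is bounding $h'(\eta_1)$. The dangerous contribution is $(n-n_1)h_0(\alpha)$ coming from the factor $\alpha^{n-n_1}$, and a priori $n-n_1$ could be as large as $n$. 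Here is where Lemma \ref{lem:Case0} is crucial: in Case 1 we have $(n-n_1)\log(|\alpha|/\alpha')<C_{18}(\log m)^2$, so $n-n_1=O((\log m)^2)$, which together with Lemma \ref{lem:po-height} gives $h'(\eta_1)\le C_{30}(\log m)^2$.

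Baker--W\"ustholz then delivers $\log|{\it\Phi}_1|\ge -C(3,d)\,h'(\eta_1)h'(\alpha)h'(\beta)\log m - \log 2$, and combining with \eqref{Case1} yields $(m-m_1)\log(|\beta|/\beta')<C_{33}(\log m)^3$. The symmetric argument in Case 2, using \eqref{Case2} and the corresponding linear form ${\it\Lambda}_2$, gives $(n-n_1)\log(|\alpha|/\alpha')<C_{34}(\log m)^3$. Finally, whichever of the two quantities is the minimum is already bounded by $C_{18}(\log m)^2$ from Lemma \ref{lem:Case0}, while the other one is bounded by $C_{35}(\log m)^3$ from the case just treated. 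Taking $C_{35}=\max\{C_{33},C_{34}\}$ (and noting $C_{18}(\log m)^2$ is absorbed for $m$ large), we obtain the claimed bound on the maximum. The degenerate sub-cases $|{\it\Lambda}_i|>0.5$ or $\eta_1|\alpha|^{b_2}|\beta|^{b_3}<0$ are handled exactly as after Lemma \ref{lem:Case0}, by absorbing them into the constant $C_{35}$.
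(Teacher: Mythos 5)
Your proposal follows the paper's argument essentially step for step: the same two-case split governed by Lemma \ref{lem:Case0}, the same rearrangement of \eqref{eqt} to produce inequality \eqref{Case1} (respectively \eqref{Case2}), the same crucial use of the $(\log m)^2$ bound on $n-n_1$ (or $m-m_1$) to control $h'(\eta_1)$, the same vanishing check via Lemma \ref{height}, and the same Baker--W\"ustholz application to bound the remaining quantity by $C_{35}(\log m)^3$. This matches the paper's proof.
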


Note that in view of $|{\it\Lambda}_1| > 0.5$ or $\frac{a(n)}{b(m)}\left(\alpha^{n-n_1} -\frac{a(n_1)}{a(n)}\right)<0$, inequality \eqref{Case1} is possible only if 
$$ 
C_{23} \left(\frac{|\beta|}{\beta'}\right)^{m_1-m} \geqslant e^{\frac{1}{2}} - 1 > 0.648,
$$ 
which leads to $m-m_1 \leqslant \frac{\log \left(\frac{C_{23}}{0.648}\right)}{\log\left(\frac{|\beta|}{\beta'}\right) }$.
In view of $|{\it\Lambda}_2| > 0.5$ or $\frac{b(m)}{a(n)}\left(\beta^{m-m_1} -\frac{b(m_1)}{b(m)}\right)<0$, inequality \eqref{Case2} is possible only if
$$ 
C_{28} \left(\frac{|\alpha|}{\alpha'}\right)^{n_1-n} \geqslant e^{\frac{1}{2}} - 1 > 0.648,
$$ 
which leads to $n-n_1 \leqslant \frac{\log \left(\frac{C_{28}}{0.648}\right)}{\log\left(\frac{|\alpha|}{\alpha'}\right)}$. 
Both cases can be covered by the bound provided by Lemma \ref{lem:Case1-2} as long as 
$$
C_{35} \geqslant \frac{1}{(\log M_3)^3} \max \left\{  \log \left(\frac{C_{28}}{0.648}\right),
  \log \left(\frac{C_{23}}{0.648}\right)  \right\}.
$$

One more time we have to apply Theorem \ref{BaWu}. This time we rewrite equation~\eqref{eqt} by collecting ``large" terms on the left hand side as
$$
a(n) \alpha^n - a(n_1) \alpha^{n_1} - b(m) \beta^m + b(m_1) \beta^{m_1} = L\left(a'{\alpha'}^n + a'{\alpha'}^{n_1} + b'{\beta'}^m + b'{\beta'}^{m_1} \right)
$$
and obtain
\begin{multline*}
\left|  a(n) \alpha^{n_1}\left(\alpha^{n-n_1} -\frac{a(n_1)}{a(n)}\right) - b(m) \beta^{m_1} \left(\beta^{m-m_1}  -\frac{b(m_1)}{b(m)}\right)  \right|\\
\leqslant a'{\alpha'}^n + a'{\alpha'}^{n_1} + b'{\beta'}^m + b'{\beta'}^{m_1}.
\end{multline*}
Dividing through $b(m) \beta^{m_1} \left(\beta^{m-m_1}  -\frac{b(m_1)}{b(m)}\right) $ and using the inequalities \eqref{ineq1} and \eqref{ineq2} we get
\begin{align*}
\left| \frac{a(n) \alpha^{n_1}\left(\alpha^{n-n_1} -\frac{a(n_1)}{a(n)}\right)}{b(m) \beta^{m_1} \left(\beta^{m-m_1}  -\frac{b(m_1)}{b(m)}\right)}  - 1 \right| & 
\leqslant   \frac{a'{\alpha'}^n}{|b(m)||\beta|^{m_1} \left|\beta^{m-m_1}  -\frac{b(m_1)}{b(m)}\right|}\\ 
+ \frac{a'{\alpha'}^{n_1}}{|b(m)||\beta|^{m_1}\left|\beta^{m-m_1}  -\frac{b(m_1)}{b(m)}\right|} & + \frac{b'{\beta'}^m }{|b(m)||\beta|^{m_1} \left|\beta^{m-m_1}  -\frac{b(m_1)}{b(m)}\right|}\\
&+ \frac{b'{\beta'}^{m_1}}{|b(m)||\beta|^{m_1}\left|\beta^{m-m_1}  -\frac{b(m_1)}{b(m)}\right|}.
\end{align*}
We make use of inequality \eqref{computer} to get
\begin{align*}
{\alpha'}^n & = \exp\left(n \log \alpha' \right)\\
& < \exp\left(m \frac{\log|\beta|}{\log |\alpha|}\log \alpha' + \tau\frac{\log \alpha'}{\log |\alpha|} \log m + C_9 \log \alpha' \right)\\
& = \exp\left( m \frac{\log \alpha' }{ \log |\alpha|} \log |\beta|  \right)\left(m^\tau\right)^{\frac{\log \alpha'}{\log |\alpha|}} \exp \left( C_9 \log \alpha' \right)\\
& < C_{36} m^\tau \gamma^m , 
\end{align*}
where $\gamma = |\beta|^{\frac{\log \alpha' }{ \log |\alpha|}}$. Note that since $|\alpha|>\alpha'>1$ and $|\beta|>1$ we have that $|\beta|>\gamma>1$. So that
\begin{align*}
\frac{a' {\alpha'}^n}{|b(m)||\beta|^{m_1} \left|\beta^{m-m_1}  -\frac{b(m_1)}{b(m)}\right|}  
& < \frac{C_{36}a'm^\tau\gamma^m}{|b(m)||\beta|^{m}\left|1 - \beta^{m_1-m}\frac{b(m_1)}{b(m)}\right|} \\
& = \frac{C_{36}a'm^\tau\gamma^m}{|b(m)||\beta|^{m}\left| 1 - \frac{b(m_1)}{b(m)\beta^{m-m_1}} \right|} \\
&  \leqslant   \frac{C_{36}a'm^\tau\gamma^m}{|b(m)||\beta|^{m}\left|1 - \frac{1}{\beta} \right|}
\leqslant C_{37} \left( \frac{|\beta|}{\gamma} \right)^{-m}.
\end{align*}
In addition, since we assume that $\alpha'> 1$, we have
\begin{align*}
\frac{a'{\alpha'}^{n_1}}{|b(m)||\beta|^{m_1} \left|\beta^{m-m_1}  -\frac{b(m_1)}{b(m)}\right|} 
& < \frac{a' {\alpha'}^{n}}{|b(m)||\beta|^{m} \left|1 - \frac{b(m_1)}{b(m)\beta^{m-m_1}}\right|}\\ 
& <  \frac{C_{36}a' m^\tau \gamma^{m}}{|b(m)||\beta|^{m}\left| 1 - \frac{1}{\beta} \right|} 
\leqslant C_{37} \left( \frac{|\beta|}{\gamma} \right)^{-m}.
\end{align*}
Furthermore,
$$
\frac{b'{\beta'}^m }{|b(m)||\beta|^{m_1} \left|\beta^{m-m_1}  -\frac{b(m_1)}{b(m)}\right|}
\leqslant \frac{b'{\beta'}^m }{|b(m)||\beta|^{m}\left| 1 - \frac{1}{\beta} \right|} 
\leqslant C_{38} \left( \frac{|\beta|}{\beta'} \right)^{-m}.
$$  
Since we may assume that $\beta'> 1$ we get
$$
\frac{b'{\beta'}^{m_1} }{|b(m)||\beta|^{m_1} \left|\beta^{m-m_1}  - \frac{b(m_1)}{b(m)} \right|} 
\leqslant  \frac{b'{\beta'}^{m} }{|b(m)||\beta|^{m}\left| 1 - \frac{1}{\beta} \right|} 
= C_{39} \left( \frac{|\beta|}{\beta'} \right)^{-m}.
$$
Therefore,
\begin{align}  \label{Case3}
\left| \frac{a(n) \left(\alpha^{n-n_1} -\frac{a(n_1)}{a(n)}\right)}{b(m) \left(\beta^{m-m_1}  -\frac{b(m_1)}{b(m)}\right)}\alpha^{n_1}\beta^{-m_1}  - 1 \right|  
\leqslant  C_{40} \Gamma^{-m},
\end{align}
where $\Gamma=\min \left\{ \frac{  |\beta| }{ \beta'}, \frac{  |\beta| }{ \gamma} \right\}$.
In this final step we consider the linear form
$${\it\Lambda}_3 =n_1 \log |\alpha| - m_1 \log |\beta| + \log \left| \frac{a(n) \left(\alpha^{n-n_1} -\frac{a(n_1)}{a(n)}\right)}{b(m) \left(\beta^{m-m_1}  -\frac{b(m_1)}{b(m)}\right)}\right|$$
and assume that $|{\it\Lambda}_3| \leqslant 0.5$ and $\frac{a(n) \left(\alpha^{n-n_1} -\frac{a(n_1)}{a(n)}\right)}{b(m) \left(\beta^{m-m_1}  -\frac{b(m_1)}{b(m)}\right)}>0$. Further, we put
$${\it\Phi}_3 = e^{{\it\Lambda}_3} - 1 =\left|\frac{a(n) \left(\alpha^{n-n_1} -\frac{a(n_1)}{a(n)}\right)}{b(m) \left(\beta^{m-m_1}  -\frac{b(m_1)}{b(m)}\right)}\right||\alpha|^{n_1}|\beta|^{-m_1} - 1.$$

As before we take $B=m$ and we choose
\begin{gather*}
 \eta_1 =\left|\frac{a(n) \left(\alpha^{n-n_1} -\frac{a(n_1)}{a(n)}\right)}{b(m) \left(\beta^{m-m_1}  -\frac{b(m_1)}{b(m)}\right)}\right|, \quad b_1 = 1,\\
 \eta_2 = |\alpha|, \quad b_2 = n_1, \quad \eta_3 = |\beta|, \quad b_3 = -m_1.
\end{gather*} 

For $h'(\eta_1)$, we can use the properties of the height and the results of Lemma \ref{lem:po-height} and Lemma \ref{lem:Case1-2} to get 
\begin{align*}
h_0(\eta_1) &= h_0\left(\frac{a(n) \left(\alpha^{n-n_1} -\frac{a(n_1)}{a(n)}\right)}{b(m) \left(\beta^{m-m_1}  -\frac{b(m_1)}{b(m)}\right)} \right) \\
& \leqslant h_0\left(\frac{a(n)}{b(m)}\right) + (n-n_1) h_0(\alpha ) + (m-m_1) h_0(\beta )\\
& \quad+ h_0\left(\frac{a(n_1)}{a(n)}\right)+h_0\left(\frac{b(m_1)}{b(m)}\right)+ 2 \log 2\\
& \leqslant h_0\left(\frac{a(n)}{b(m)}\right) +   \frac{C_{35} h_0(\alpha ) (\log m)^3 }{\log \left(\frac{|\alpha|}{\alpha'}\right)} 
+ \frac{C_{35}h_0(\beta ) (\log m)^3}{\log \left(\frac{|\beta|}{\beta'}\right) }\\
&\quad +h_0\left(\frac{a(n_1)}{a(n)}\right)+h_0\left(\frac{b(m_1)}{b(m)}\right)+  2 \log 2\\
& \leqslant C_{41} (\log m)^3  
\end{align*}
and thus 
$$h'(\eta_1) = \frac{1}{d} \max \left\{ dh_0(\eta_1), |\log\eta_1|, 1 \right\} \leqslant C_{42} (\log m)^3. $$
It should be noted that as before $h'(\alpha)$ and $h'(\beta)$ are effectively computable. 

Before we can apply Theorem \ref{BaWu} we have to ensure that ${\it\Phi}_3 \neq 0$,
i.e. 
$$ \pm \frac{a(n) \left(1- \frac{a(n_1)\alpha^{n_1-n}}{a(n)} \right)}{b(m)\left(1- \frac{b(m_1)\beta^{m_1-m}}{b(m)}\right)} = \frac{\beta^m}{\alpha^n}.$$
This together with Lemma \ref{lem:Case1-2} yields
$$
h_0 \left( \frac{\beta^{m}}{\alpha^{n}} \right) = h_0 \left( \frac{a(n) \left(1- \frac{a(n_1)\alpha^{n_1-n}}{a(n)} \right)}{b(m)\left(1- \frac{b(m_1)\beta^{m_1-m}}{b(m)}\right)} \right) < C_{43} (\log m)^3.
$$
Similar to the argument in Case 1 and Case 2, we deduce by using Lemma \ref{height} that
$$
C_{43} (\log m)^3 > h_0 \left( \frac{\beta^{m}}{\alpha^{n}} \right) \geqslant C_0 \max \{n, m \} \geqslant C_0 m .
$$
Thus $m$ is bounded by an effectively computable constant. Besides, since $m > n$ so $n$ is also bounded and therefore also $c$ and
we deduce Theorem \ref{Main} in this case.

Now an application of Theorem \ref{BaWu} yields
$$
\log |{\it\Phi}_3|
 > -C(3,d)h'(\eta_1) h'(\alpha) h'(\beta) \log m - \log 2.
$$
Combining this inequality with inequality \eqref{Case3} we get
$$ m \log\Gamma + \log C_{42} < C_{44} (\log m)^4 + \log 2.$$
which yields $m < C_{45}$.

Similarly as in the cases above the assumption that $|{\it\Lambda}_3| > 0.5$ or 
$$\frac{a(n) \left(\alpha^{n-n_1} -\frac{a(n_1)}{a(n)}\right)}{b(m) \left(\beta^{m-m_1}  -\frac{b(m_1)}{b(m)}\right)}<0$$
leads in view of inequality \eqref{Case3} to 
$$
C_{40} \Gamma^{-m} \geqslant e^{\frac{1}{2}} - 1 > 0.648,
$$
which leads to   
$m  \leqslant \frac{\log \left(\frac{C_{40}}{0.648}\right)}{\log \Gamma }$.
These can be covered by the above bound $ m < C_{45}$ as long as 
$$C_{45} \geqslant \frac{\log \left(\frac{C_{40}}{0.648}\right)}{\log \Gamma}.$$

As a conclusion, if $n \geqslant N_3$ and $m \geqslant M_3$, we have $n < m < C_{45}$, where $C_{45}$ is an effectively computable constant.
Therefore, together with those finitely many cases where $n \leqslant N_4$, $m \leqslant M_4$ and all possible cases of $(m,n)$ which yield $|{\it\Phi}|, |{\it\Phi}_i|=0$ for $i=1, 2, 3$,
there can only be finitely many integers $c$ having at least two distinct representations of the form $U_n - V_m$. The number of integers $c$ and the corresponding
values of $c$ are both effectively computable. Therefore Theorem \ref{Main} is proved.

\def\cprime{$'$}


 \end{document}